\newcommand{\no}[1]{#1}
\renewcommand{\no}[1]{}
	\renewcommand{\Delta}{\upDelta}}
\title{Curvilinear Mask Optimization for Inverse Lithography Based on B-splines and Delaunay Triangulation\thanks{The work was supported by National Key  R\&D Program of China 2019YFA0709600, 2019YFA0709602. }}
\date{}
\author{Xiaoru Yi\thanks{Department of Mathematical Sciences, Tsinghua University, Beijing 100084, China. (\texttt{yixr23@mails.tsinghua.edu.cn, jqchen@tsinghua.edu.cn})}\and Junqing Chen\footnotemark[2]}
\date{\today}
\newtheorem{theorem}{Theorem}[section]
\newtheorem{lemma}{Lemma}[section]
\newtheorem{definition}{Definition}[section]
\newtheorem{remark}{Remark}[section]
\numberwithin{equation}{section}
\renewcommand{\geq}{\geqslant}
\begin{document}
	
	\maketitle
	\begin{abstract}
		In this paper, we propose a novel gradient-based method to optimize curvilinear masks in optical lithography. The mask pattern is represented by periodic B-spline curves. 
		We apply Delaunay triangulation to discretize the domains circled by the spline curves. Subsequently, we establish an explicit relationship between the integral points and the control points of the boundary spline curve. Based on the relationship, we derive explicit formulas of the gradient of the optimization objective function with respect to the coordinates of the control points. Then we propose an inverse lithography algorithm to optimize the curvilinear mask pattern. Finally, the results of the numerical experiments demonstrate the feasibility and extensive adaptability of our method.
		
		{\bf Key words}:curvilinear masks, periodic B-spline curve, inverse lithography techniques, gradient-based optimization, optical proximity correction
	\end{abstract}
	
	\maketitle

\section{Introduction}\label{introduction}
As a crucial part of semiconductor manufacturing, projection lithography technology is tasked with transferring the patterns on the photomask onto the wafer coated with photoresist. During this process, patterns with critical size several times the lithographic wavelength generally do not require any correction. However, as the critical size decreases to be comparable or even smaller than the wavelength of the lithography light source, owing to the diffraction and interference effects of adjacent patterns, the patterns on the wafer will experience varying degrees of distortion, namely the so-called optical proximity effect (OPE). 
	
In order to overcome the distortion from the perspective of the mask pattern, the optical proximity correction (OPC) technology has been proposed. By making certain corrections to the mask pattern, the pattern obtained on the wafer can be made to more closely resemble the designed pattern. In the early days, most OPC technologies are rule-based OPC. Methods of this kind set the pattern correction rules based on the operational experience of lithography engineers to correct simple mask patterns. But as the size of the mask pattern further decreases, the proximity effect becomes more severe and the correction rules for the mask pattern increase exponentially, even becoming unfeasible. To overcome this difficulty, Rieger and Stirniman \cite{Rieger1}, and Nick Cobb \cite{Cobb1} first proposed the model-based OPC method. The earliest OPC method was based on the Manhattan mask, which only makes fine-tuning of the segmented boundaries along specific angles, so the flexibility of this approach is relatively limited. 
	
To extend the flexibility of OPC and obtain a better mask pattern, pixel-based inverse lithography technology (ILT) was proposed. This method is highly flexible and enables more precise adjustments to the photomask, thus resulting in a superior mask quality. Saleh and Sayegh explored optimized photomasks by "pixel flipping" in 1981 \cite{1981}. Liu and Zakhor used branch and bound and the simplex method \cite{liu1} and the "bacteria" algorithm \cite{liu2} to solve the problem. Granik considered an objective function with linear, quadratic, and nonlinear formulations \cite{granik}. Ma and his collaborators extended the pixel-based optimization method from the scalar to the vector imaging model \cite{maxu1,maxu2} and introduced the source and mask optimization (SMO) in \cite{maxu3}. However, this method has some inherent drawbacks. For example, it introduces a large number of optimization variables, and the optimized mask pattern is relatively fragmented and difficult to manufacture in practice. Some regularization methods have been proposed to constrain the mask pattern. Choy et al. developed a robust computational algorithm for the loss function with multiple regularization terms \cite{robust}. Liu and Chen applied the ADMM method to the inverse lithography and performed convergence analysis \cite{liuhaibo}. A fuller review of inverse lithography techniques could be found in \cite{Inverse30,advances}.  
	
Variable shaped beam (VSB) mask writers are widely used in mask manufacturing. Since they cannot handle complex mask patterns, the vast majority of mask designs have to be simplified to the Manhattan pattern for production. This is also the reason why curvilinear masks were not highly regarded in the early days. With the introduction of multi-beam mask writers, it has become possible to produce curvilinear masks. Pang and Fujimura reviewed the trend and benefits of OPC methods based on curvilinear mask in \cite{CMwhy}. In recent years, many OPC methods based on curvilinear masks have emerged. For example, Chen et al. extended traditional edge-based OPC to vertices and produced curvilinear OPC results by following the conventional OPC workflow in \cite{Yung-YuChen}. Huang et al. proposed distance-versus-angle signature (DVAS), a one-dimensional function, to represent a two-dimensional boundary of the mask and optimized the shape in \cite{huangweichen}. Yang et al. proposed a curvilinear OPC method adopting the quasi-uniform B-spline curve and non-uniform B-spline curve to improve the optimization efficiency in \cite{heyang1,heyang2}. Wei et al. established an optimization method based on an implicit function to reduce the number of variables in \cite{wei}. Other attempts on the optimization of curvilinear mask can be found in \cite{other1,other2,other3,other4,other5}. Machine learning models are also applied for accurate curve correction \cite{machine1,machine2,machine3}. For further information about the curvilinear mask, we recommend reading \cite{CMmotivations,CMoverview}.
	
In this paper, we propose a novel inverse lithography method based on periodic B-splines to construct the mask boundary. Compared with traditional pixel-based inverse lithography techniques, this approach can greatly reduce the number of optimization variables. Moreover, the spline curve itself is one of the standard formats in computer-aided design(CAD) \cite{CAD1}, and the optimized mask shape has good continuity and high manufacturability. We use the spline boundary to generate a triangular mesh by Delaunay triangulation and creatively establish an explicit relationship between the Gaussian integral points and the control points. Based on the photoresist imaging model, we establish an explicit gradient formula for optimization of control points, which can be generalized to a variety of gradient-based optimization algorithms. We also provide some numerical examples to verify the feasibility of our method.
	
The structure of this paper is as follows. 
In Section \ref{preliminary}, the basic knowledge about projection lithography and periodic B-splines required in this paper is introduced. 
In Section \ref{formula}, the numerical calculation formula for the forward lithography problem is presented, an explicit relationship between the integration points and the control points is established, and some important error bounds are estimated. Subsequently, based on the photoresist imaging model, an inverse lithography problem model is given and an explicit gradient formula is derived. 
In Section \ref{algorithm}, a basic description of the optimization algorithm is provided. 
In Section \ref{numerical}, some numerical examples are given to verify the reliability of our method.

\section{Preliminary}\label{preliminary}
\subsection{Basic theory of optical lithography}
Projection microlithography is the key to the semiconductor production. Normally, the imaging system implements K\"ohler's illumination, details of which can be found in \cite{principleofoptics}. A basic representation of an optical lithography imaging system is shown in Figure \ref{Figkohler}. When light is emitted from the source, it is condensed by the condenser lens $L_c$ and becomes parallel light. The parallel light illuminates the mask uniformly and carries the information of mask pattern. Then the light is modulated by the projection lens, finally converges on the image plane, and prints the mask pattern onto the wafer. In the procedure, the light will be limited by the pupil size and lose some pattern information of high frequency. The light will be scattered by the mask pattern as well, then the image on the wafer is generally different from that on the mask, so we need to take some measures to correct the pattern on the mask.
	\begin{figure}[htbp]
		\centering
		\includegraphics[width=0.7\textwidth]{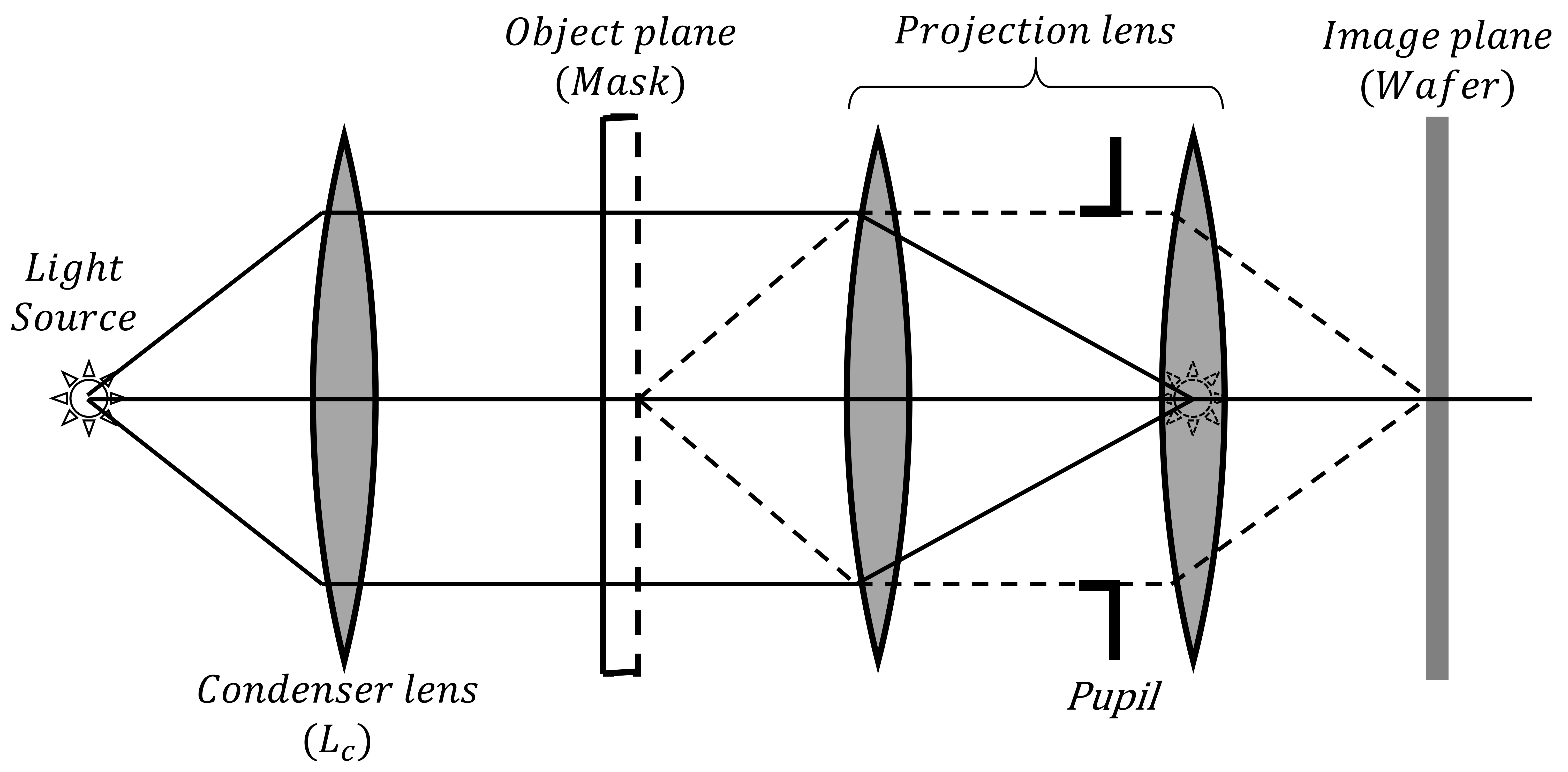}
		\caption{Basic representation of an optical lithography imaging system} 
		\label{Figkohler} 
	\end{figure}

	According to the work of \cite{OpticalImaging}, the image intensity for an isoplanatic region under K\"ohler illumination is
	\begin{equation*}
		\hat{I}(\hat{x}_i,\hat{y}_i)=\iiiint\limits_{-\infty}^{+\infty}TCC(\hat{f}^{\prime},\hat{g}^{\prime};\hat{f}^{\prime\prime},\hat{g}^{\prime\prime})\tilde{\hat{O}}(\hat{f}^{\prime},\hat{g}^{\prime})\tilde{\hat{O}}^{*}(\hat{f}^{\prime\prime},\hat{g}^{\prime\prime})e^{-i2\pi[(\hat{f}^{\prime}-\hat{f}^{\prime\prime})\hat{x}_i+(\hat{g}^{\prime}-\hat{g}^{\prime\prime})\hat{y}_i]}d\hat{f}^{\prime}d\hat{g}^{\prime}d\hat{f}^{\prime\prime}d\hat{g}^{\prime\prime},
	\end{equation*}
	where $TCC(\hat{f}^{\prime},\hat{g}^{\prime};\hat{f}^{\prime\prime},\hat{g}^{\prime\prime})$ is the transmission cross-coefficient
	\begin{equation*}	TCC(\hat{f}^{\prime},\hat{g}^{\prime};\hat{f}^{\prime\prime},\hat{g}^{\prime\prime})=\iint\limits_{-\infty}^{+\infty}\tilde{\hat{J}}(\hat{f},\hat{g})\tilde{\hat{H}}(\hat{f}+\hat{f}^{\prime},\hat{g}+\hat{g}^{\prime})\tilde{\hat{H}}^{*}(\hat{f}+\hat{f}^{\prime\prime},\hat{g}+\hat{g}^{\prime\prime}) d\hat{f}d\hat{g},
	\end{equation*}
and	$\tilde{\hat{O}},\tilde{\hat{J}},\tilde{\hat{H}}$ represent the Fourier integral of $\hat{O},\hat{J},\hat{H}$:
	\begin{equation*}
		\hat{O}(\hat{x}_o,\hat{y}_o)=\iint\limits_{-\infty}^{+\infty}\tilde{\hat{O}}(\hat{f},\hat{g})e^{-i2\pi(\hat{f}\hat{x}_o+\hat{g}\hat{y}_o)}d\hat{f}d\hat{g},
	\end{equation*}
	\begin{equation*}
		\hat{J}(\hat{x}_o'-\hat{x}_o'',\hat{y}_o'-\hat{y}_o'')=\iint\limits_{-\infty}^{+\infty}\tilde{\hat{J}}(\hat{f},\hat{g})e^{-i2\pi(\hat{f}(\hat{x}_o'-\hat{x}_o'')+\hat{g}(\hat{y}_o'-\hat{y}_o''))}d\hat{f}d\hat{g},
	\end{equation*}
	\begin{equation*}
		\hat{H}(\hat{x}_i-\hat{x}_o^{\prime},\hat{y}_i-\hat{y}_o^{\prime})=\iint\limits_{-\infty}^{\infty}\tilde{\hat{H}}(\hat{f}+\hat{f}^{\prime},\hat{g}+\hat{g}^{\prime})e^{-i2\pi((\hat{f}+\hat{f}')(\hat{x}_i-\hat{x}_o')+(\hat{g}+\hat{g}')(\hat{y}_i-\hat{y}_o'))}d\hat{x}_o^{\prime}d\hat{y}_o^{\prime}.
	\end{equation*}
The normalized variables that appear above are listed below
	\begin{equation}\label{normalize}
		\begin{aligned}
			\hat{x}_{o}&=-\frac{Mx_{o}}{\lambda_{0}/\mathrm{NA}}, &\hat{y}_{o}&=-\frac{My_{o}}{\lambda_{0}/\mathrm{NA}},\\
			\hat{x}_{i}&=\frac{x_{i}}{\lambda_{0}/\mathrm{NA}}, &\hat{y}_{i}&=\frac{y_{i}}{\lambda_{0}/\mathrm{NA}},\\
			\hat{f}&=\frac{f}{\mathrm{NA}/\lambda_{0}}, &\hat{g}&=\frac{g}{\mathrm{NA}/\lambda_{0}}. 
		\end{aligned}
	\end{equation}
	The corresponding inverse normalization are
	\begin{equation}\label{inverse normalization}
		\begin{aligned}
			x_o&=-\frac{\lambda_0\hat{x}_o}{M\mathrm{NA}}, &y_o&=-\frac{\lambda_0\hat{y}_o}{M\mathrm{NA}},\\
			x_i&=\frac{\lambda_0\hat{x}_i}{\mathrm{NA}}, &y_i&=\frac{\lambda_0\hat{y}_i}{\mathrm{NA}},\\
			f&=\frac{\hat{f}\mathrm{NA}}{\lambda_0}, &g&=\frac{\hat{g}\mathrm{NA}}{\lambda_0}.
		\end{aligned}
	\end{equation}
	Here $(x_o,y_o)$ represent the original spatial coordinates on the object plain, $(\hat{x}_{o},\hat{y}_{o})$ represent the normalized spatial coordinates on the object plain. $(x_i,y_i)$ and $(\hat{x}_{i},\hat{y}_{i})$ represent the original spatial coordinates and its normalization on the image plain. $f,g$ represent spatial frequencies and $\hat{f},\hat{g}$ are normalized spatial frequencies.
	$M$ is the lateral magnification, $\lambda_0$ is the wave length and NA is the numerical aperture of the imaging system. $\hat{I}(\hat{x}_i,\hat{y}_i)$ is the intensity at a point $(\hat{x}_i,\hat{y}_i)$ on the image plane. $\hat{J}(\hat{x}_o^{\prime}-\hat{x}_o^{\prime\prime},\hat{y}_o^{\prime}-\hat{y}_o^{\prime\prime})$ is the mutual intensity between a pair of illuminated points $(\hat{x}_o^{\prime},\hat{y}_o^{\prime})$ and $(\hat{x}_o^{\prime\prime},\hat{y}_o^{\prime\prime})$. $\hat{O}(\hat{x}_o^{\prime},\hat{y}_o^{\prime})$ is the function used to describe the mask pattern. Under the assumption that the imaging region is so small that it forms an isoplanatic region, $\hat{H}(\hat{x}_i-\hat{x}_o,\hat{y}_i-\hat{y}_o)$ is the normalized version of the optical system transfer function.
	
	In this paper, we assume that the illumination is coherent and the effective source is a point at $(\hat{f}_0,\hat{g}_0)$, namely $\tilde{\hat{J}}(\hat{f},\hat{g})=\delta(\hat{f}-\hat{f}_0,\hat{g}-\hat{g}_0)$, so that we can derive the coherent image intensity
	\begin{equation}\label{intensity}
		\hat{I}(\hat{x}_i,\hat{y}_i)=|\hat{U}(\hat{x}_i,\hat{y}_i)|^2,
	\end{equation}
with
    \begin{equation*}
	\hat{U}(\hat{x}_i,\hat{y}_i)=\iint\limits_{-\infty}^{+\infty}\tilde{\hat{H}}(\hat{f},\hat{g})\tilde{\hat{O}}(\hat{f}-\hat{f}_0,\hat{g}-\hat{g}_0)e^{-i2\pi(\hat{f}\hat{x}_i+\hat{g}\hat{y}_i)}d\hat{f}d\hat{g},
	\end{equation*}
	where $\hat{U}(\hat{x}_i,\hat{y}_i)$ represents the amplitude of a point on the image plane under a normalized variable representation. The formula above implies:
	\begin{equation*}
		\tilde{\hat{U}}(\hat{f},\hat{g})=\tilde{\hat{O}}(\hat{f}-\hat{f}_0,\hat{g}-\hat{g}_0)\tilde{\hat{H}}(\hat{f},\hat{g}).
	\end{equation*}
	According to the convolution theorem, if we assume that $(\hat{f}_0,\hat{g}_0)=(0,0)$, we can get the following formula
	\begin{equation}\label{imagingformula}
		\hat{U}(\hat{x}_i,\hat{y}_i)=\iint\limits_{-\infty}^{+\infty}\hat{O}(\hat{x}_o,\hat{y}_o)\hat{H}(\hat{x}_i-\hat{x}_o,\hat{y}_i-\hat{y}_o) d\hat{x}_od\hat{y}_o.
	\end{equation}
From \cite[equation (4.10)]{OpticalImaging} , it is known that 
	\begin{equation*}
		\hat{H}(\hat{x}_i-\hat{x}_o,\hat{y}_i-\hat{y}_o)=\iint\limits_{-\infty}^{+\infty}\hat{\tilde{P}}(\hat{f},\hat{g})e^{-i2\pi(\hat{f}(\hat{x}_i-\hat{x}_o)+\hat{g}(\hat{y}_i-\hat{y}_o))}d\hat{f}d\hat{g},
	\end{equation*}
	where $\hat{\tilde{P}}(\hat{f},\hat{g})$ is the pupil function under the normalized coordinates, which is:
	\begin{equation*}
		\hat{\tilde{P}}(\hat{f},\hat{g})=
		\begin{cases}
			1&\quad\text{if}\sqrt{\hat{f}^2+\hat{g}^2}\leqslant1,\\
			0&\quad\text{otherwise.}
		\end{cases}
	\end{equation*}
    Here we define $circ$ function in the following way:
    \begin{equation*}
        circ(\sqrt{x^2+y^2})=
        \begin{cases}
        1,\quad &if ~\sqrt{x^2+y^2}<1,\\
        1/2, \quad &if ~\sqrt{x^2+y^2}=1,\\
        0,\quad &else.
        \end{cases}
    \end{equation*}
	Since the set in which the value of the circ function is 1/2 is a set of measure of zero, we can replace $\hat{\tilde{P}}(\hat{f},\hat{g})$ with $circ(\sqrt{\hat{f}^2+\hat{g}^2})$ without making any difference to the value of $\hat{H}(\hat{x}_i-\hat{x}_o,\hat{y}_i-\hat{y}_o)$,
	so we can get
	\begin{equation*}
		\hat{H}(\hat{x}_i-\hat{x}_o,\hat{y}_i-\hat{y}_o)=\iint\limits_{-\infty}^{+\infty}circ(\sqrt{\hat{f}^2+\hat{g}^2})e^{-i2\pi(\hat{f}(\hat{x}_i-\hat{x}_o)+\hat{g}(\hat{y}_i-\hat{y}_o))}d\hat{f}d\hat{g}.
	\end{equation*}
This means that $\hat{H}(\hat{x}_i-\hat{x}_o,\hat{y}_i-\hat{y}_o)$ is in fact the standard Fourier transform of $circ$ function. Here we give the result of transformation directly:
	\begin{equation}\label{H}
		\hat{H}(\hat{x}_i-\hat{x}_o,\hat{y}_i-\hat{y}_o)=\frac{J_1(2\pi\rho)}{\rho}=\frac{J_1(2\pi\sqrt{(\hat{x}_i-\hat{x}_o)^2+(\hat{y}_i-\hat{y}_o)^2})}{\sqrt{(\hat{x}_i-\hat{x}_o)^2+(\hat{y}_i-\hat{y}_o)^2})},
	\end{equation}
	where $J_1$ is a first-order Bessel function of the first kind. For detailed derivation process, we recommend reading \cite[Chapter 2.1]{FourierOptics}
	
	\subsection{Periodic B-spline curve boundary representation}\label{spline boundary}
	In practical lithography for high-volume manufacturing, people always hope to find a relatively regular pattern which is easy to produce. Benefiting from the renewal of production equipment, it is possible to manufacture curvilinear masks whose pattern boundary consists of flexible curve instead of only axis-parallel rectangles or 45-degree triangles\cite{CMoverview}. A natural assumption is that the boundary of the curvilinear mask pattern is continuous. We use $\Omega$ to represent the transparent part of the mask and the boundary of $\Omega$ can be written in $\partial \Omega$. Moreover, since the transparent part of the mask is closed, the boundary curve $\partial\Omega$ should be periodic. If we limit the parameter of the boundary curve into interval $[0,1]$, and consider that the transparent part of the mask is composed of several close domains disconnected from each other, the boundary of curvilinear mask can be described as 
	\begin{align*}
		&\partial \Omega=\{\textbf{g}_1(t),\textbf{g}_2(t),\cdots,\textbf{g}_n(t)\},\\        &\textbf{g}_k(t)=x_k(t)\mathbf{i}+y_k(t)\mathbf{j},~k=1,2,\cdots,n,\\
        &x_k(t),y_k(t)\in C([0,1]), ~x_k(0)=x_k(1), ~y_k(0)=y_k(1).
	\end{align*}
Here $\mathbf{i,j}$ are the unit vectors in $x$ and $y$ directions respectively.

    In the followings, we will use the B-spline function to represent the curve boundary of $\Omega$ because of the powerful expressive ability of B-spline. Since the boundary is periodic, it is natural to use the periodic B-spline function.
    First, we will give some basic notions about the B-spline curve. For a better understanding of the theory of the spline function, we recommend reading \cite{spline,splinefunctions,Buffaisogeometric}.


	B-spline curve is a kind of widely used polynomial spline curve, the basis of which is defined recursively:
\begin{definition}\label{B-spline}
	Given a knot vector on the interval $[a,b]$ written $\varXi=\{\xi_1,\xi_2,\cdots,\xi_{n+p+1}\}$, where $\xi_1=a, \xi_{n+p+1}=b, \xi_{k}\leqslant\xi_{k+1}$, and $\xi_k\in\mathbb{R}$ is the $k$-th knot, $k=1,2,\cdots,n+p+1$, $p$ is the polynomial degree, and $n$ is the number of basis functions that comprise the B-spline. Then k-th B-spline basis function of degree p $N_{k,p}$ can be defined recursively according to the following de Boor-Cox formula for $p=0,1,2,...$ :
	\begin{equation*}
		\left\{\begin{aligned}
			&N_{k,0}(\xi)=\left\{\begin{aligned}1, \quad &\xi\in[\xi_k,\xi_{k+1}),\\0, \quad &\text{else,}\end{aligned}\right.\\
			&N_{k,p}(\xi)=\frac{\xi-\xi_k}{\xi_{k+p}-\xi_k}N_{k,p-1}(\xi)+\frac{\xi_{k+p+1}-\xi}{\xi_{k+p+1}-\xi_{k+1}}N_{k+1,p-1}(\xi),\quad p\geqslant1,\\
			&\text{define}\quad\frac{0}{0}=0.
			\end{aligned}\right.
	\end{equation*}
\end{definition}	
With the definition of B-spline basis, we can further define the B-spline curves:
	\begin{definition}
		
		Given $n$ space vectors $\{\mathbf{P}_k\}_{k=1}^{n}\in\mathbb{R}^d$, and assuming that $N_{k,p}(\xi)$ are the B-spline basis functions of degree $p$ defined on the knot vector $\varXi=\{\xi_1,\xi_2,\cdots,\xi_{n+p+1}\}(\xi_k\leqslant\xi_{k+1},k=1,2,\cdots,n+p)$, we can define:
		\begin{equation*}
			\mathbf{P}(\xi)=\sum_{k=1}^{n}\mathbf{P}_{k}N_{k,p}(\xi),\quad \xi\in[\xi_{p},\xi_{n+1}],
		\end{equation*} 
		is the $p$-th order B-spline curve correspond to knot vector $\varXi$.
	\end{definition}

To generate a periodic B-spline curve, we first introduce the extended partition.
\begin{definition}\label{extpart}
	For a knot vector on the interval $[a,b]$ written $\varXi=\{\xi_1,\xi_2,\cdots,\xi_{n+p+1}\}$, we can define its extended partition $\bar{\varXi}$ as follows:
	\begin{equation*}
		\bar{\varXi}=\{\xi_{1-p},\cdots,\xi_1,\xi_2,\cdots,\xi_{n+p+1},\cdots,\xi_{n+2p+1}\}.
	\end{equation*}
	Here we define:
	\begin{equation*}
		\left\{
		\begin{aligned}
			&\xi_{k}=\xi_{k+n+p}-L,&1-p\leqslant&k<1,\\
			&\xi_{k}=\xi_{k-n-p}+L,&n+p+1< &k\leqslant n+2p+1,
		\end{aligned}
		\right.
	\end{equation*}
	where $L=b-a$.
\end{definition}
	
	Based on the extended partition in Lemma \ref{extpart}, we can further define the periodic B-spline basis function by the following definition.
	\begin{definition}
		For a given extended partition $\bar{\varXi}=\{\xi_{1-p},\cdots,\xi_1,\xi_2,\cdots,\xi_{n+p+1},\cdots,\xi_{n+2p+1}\}$ of the interval $[a,b]$, 
        we can define the $k$-th periodic B-spline basis function of degree $p$ by
		\begin{equation*}
			\mathring{N}_{k,p}(\xi)=\left\{
			\begin{aligned}
				&N_{k,p}(\xi) \quad k=1,2,\cdots,n,\\
				&N_{k,p}(\xi)+N_{k-n-p,p}(\xi) \quad k=n+1,n+2,\cdots,n+p,
			\end{aligned}
			\right.
		\end{equation*}
	\end{definition}
	Here, the definition is a little different from that in \cite{splinefunctions}, but in fact they are the same.
	We can similarly define the periodic B-spline curve as follows.
	\begin{definition}
			Given $n+p$ space vectors $\{\mathbf{P}_k\}_{k=1}^{n+p}\in\mathbb{R}^d$, and assuming that $\mathring{N}_{k,p}(\xi)$ are the periodic B-spline basis functions of degree $p$  defined on the extended partition
            \begin{equation*}
            \bar{\varXi}=\{\xi_{1-p},\cdots,\xi_1,\xi_2,\cdots,\xi_{n+p+1},\cdots,\xi_{n+2p+1}\}.
            \end{equation*}
           Then the periodic B-spline curve of degree $p$ correspond to knot vector in  $\bar{\varXi}$ is defined by
		\begin{equation*}
			\mathring{\mathbf{P}}(\xi)=\sum_{k=1}^{n+p}\mathbf{P}_k\mathring{N}_{k,p}(\xi),\quad \xi\in[\xi_1,\xi_{n+p+1}].
		\end{equation*} 
	 And we call $\mathbf{P}_k, ~k=1,2,...,n+p$ the control points of periodic B-spline curve.
	\end{definition}
	
	With the definition of periodic B-spline curve, we can use it to describe the boundary of mask pattern. Theorem 8.12 in \cite{splinefunctions} guarantees that for any periodic continuous boundary curve, it can be approximated well by periodic B-spline curves. With the increasing of number of control points and the decreasing of $\bar{\Delta}=\max\limits_{1\leqslant k \leqslant n+p}\{\xi_{k+1}-\xi_{k}\}$, the distance between a continuous periodic function and its periodic B-spline approximation can be arbitrarily small, which means that it is reasonable to represent the boundary of mask pattern by periodic B-spline curve.
	
	\section{Forward and inverse lithography based on B-spline function}\label{formula}
	\subsection{Region approximation and discretization}
	In this part, for the sake of simplicity, we omit the symbol~ $\hat{ }$ ~which represents the normalization of function and variable, and rewrite the imaging formulas \eqref{intensity} and \eqref{imagingformula} as
	\begin{equation}\label{imaging1}
		U(x_i,y_i)=\iint\limits_{-\infty}^{+\infty}O(x_o,y_o)H(x_i-x_o,y_i-y_o) dx_ody_o,\quad I(x_i,y_i)=|U(x_i,y_i)|^2,
	\end{equation}
	where $O(x_o,y_o)$ describes the pattern on the mask. In the following, we represent the pattern by binary function
	\begin{equation*}
		O(x_o,y_o)=
		\begin{cases}
			1&\quad\text{if location} (x_o,y_o) \text{ on the mask is transparent},\\
			0&\quad\text{if location} (x_o,y_o) \text{ on the mask is not transparent}.
		\end{cases}
	\end{equation*}
	We denote the transparent part of the curvilinear mask by $S_0$, whose boundary is represented by B-spline functions introduced in Section \ref{spline boundary}. Under the binary assumption about the mask pattern, the imaging formula \eqref{imaging1} can be rewritten as:
	 \begin{equation}\label{imaging2}
	 	U(x_i,y_i)=\iint\limits_{S_0}H(x_i-x_o,y_i-y_o) dx_ody_o,\quad I(x_i,y_i)=|U(x_i,y_i)|^2.
	 \end{equation}
     
	To calculate the integral in domain $S_0$, we have to approximate the domain by polygons and discretize the polygons by triangulation. The procedure is shown in Figure \ref{polygon approximation and triangulation}, so that the formula \eqref{imaging2} becomes:
	 \begin{equation}\label{U_triangle}
U(x_i,y_i)=\sum\limits_{p=1}^{N_T}\iint\limits_{S_{p}}H(x_i-x_o,y_i-y_o) dx_ody_o,\quad I(x_i,y_i)=|U(x_i,y_i)|^2,
	 \end{equation}
	 where $p$ is the index of small triangle, $N_T$ is the total number of triangles, $S_p$ is the $p$-th triangle whose measure is $|S_p|$. 
      $\sum\limits_{p=1}^{N_T}|S_p|=|S|$, which is the measure of the whole polygon.
\begin{figure}[htbp]
		\centering            
		\subfloat[Original domain]   
		{
			\label{fig21}\includegraphics[width=0.3\textwidth]{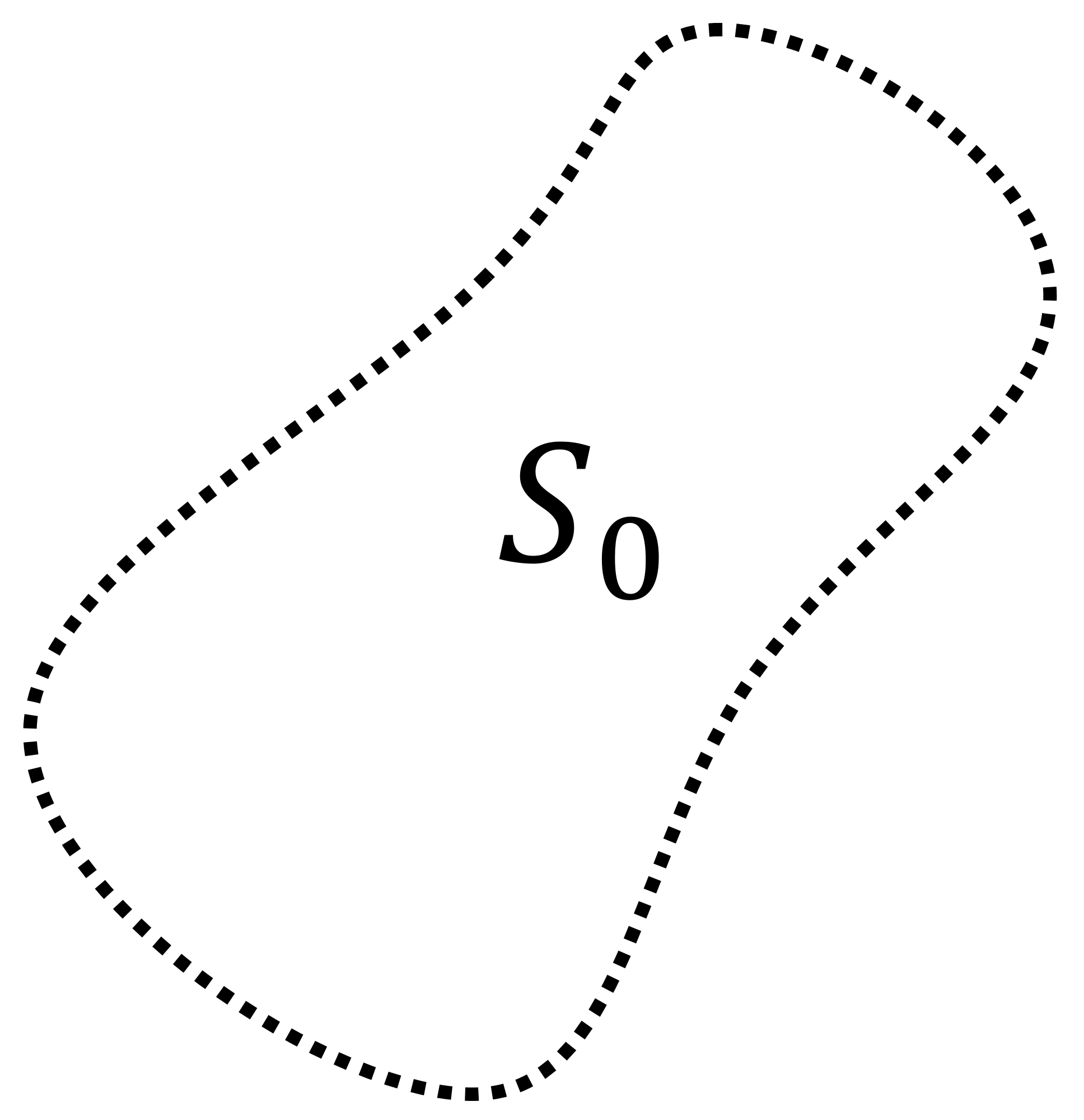}
		}
		\subfloat[Polygon approximation]   
		{
			\label{fig22}\includegraphics[width=0.3\textwidth]{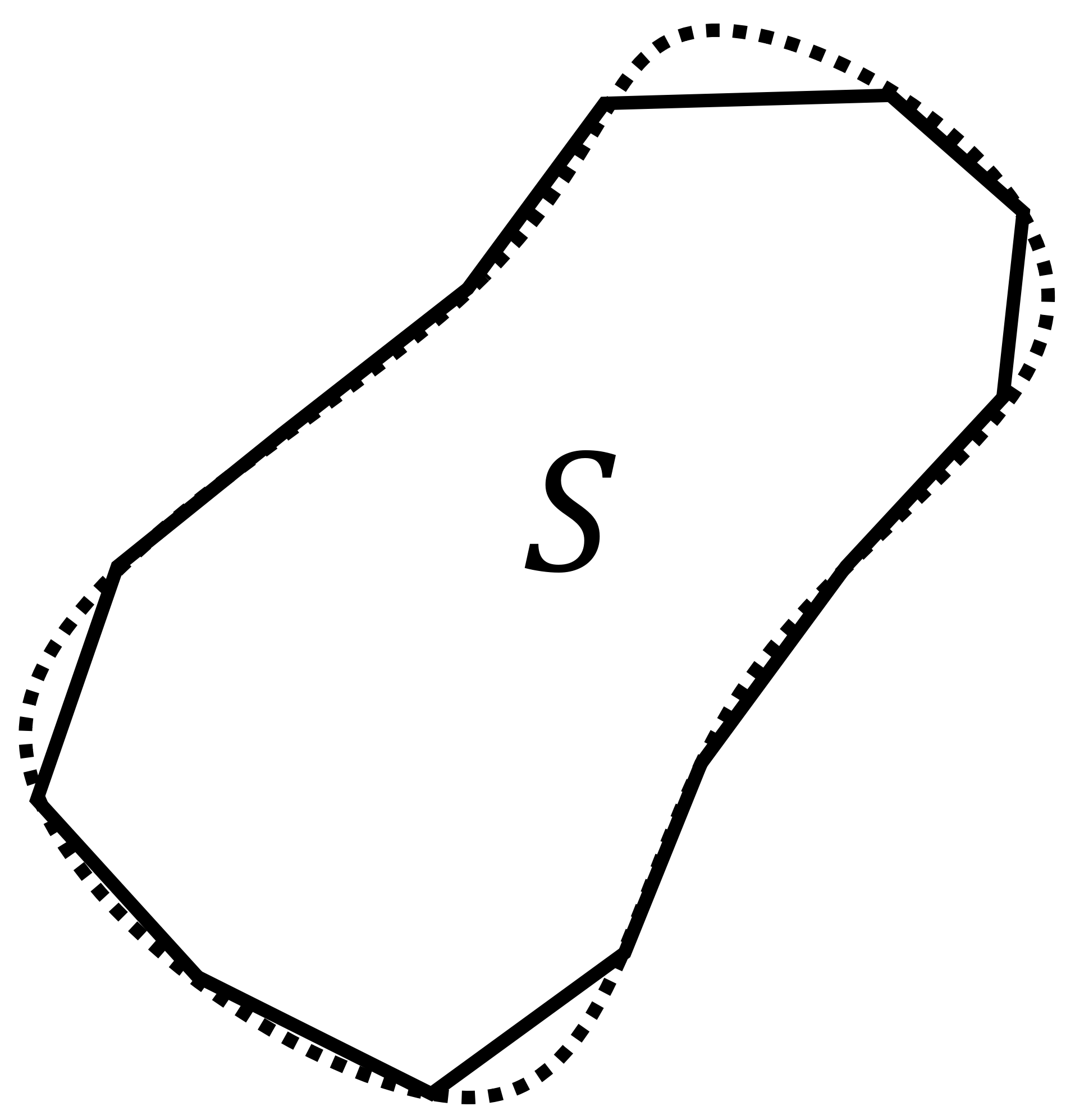}
		}
		\subfloat[Triangulation]
		{
			\label{fig23}\includegraphics[width=0.3\textwidth]{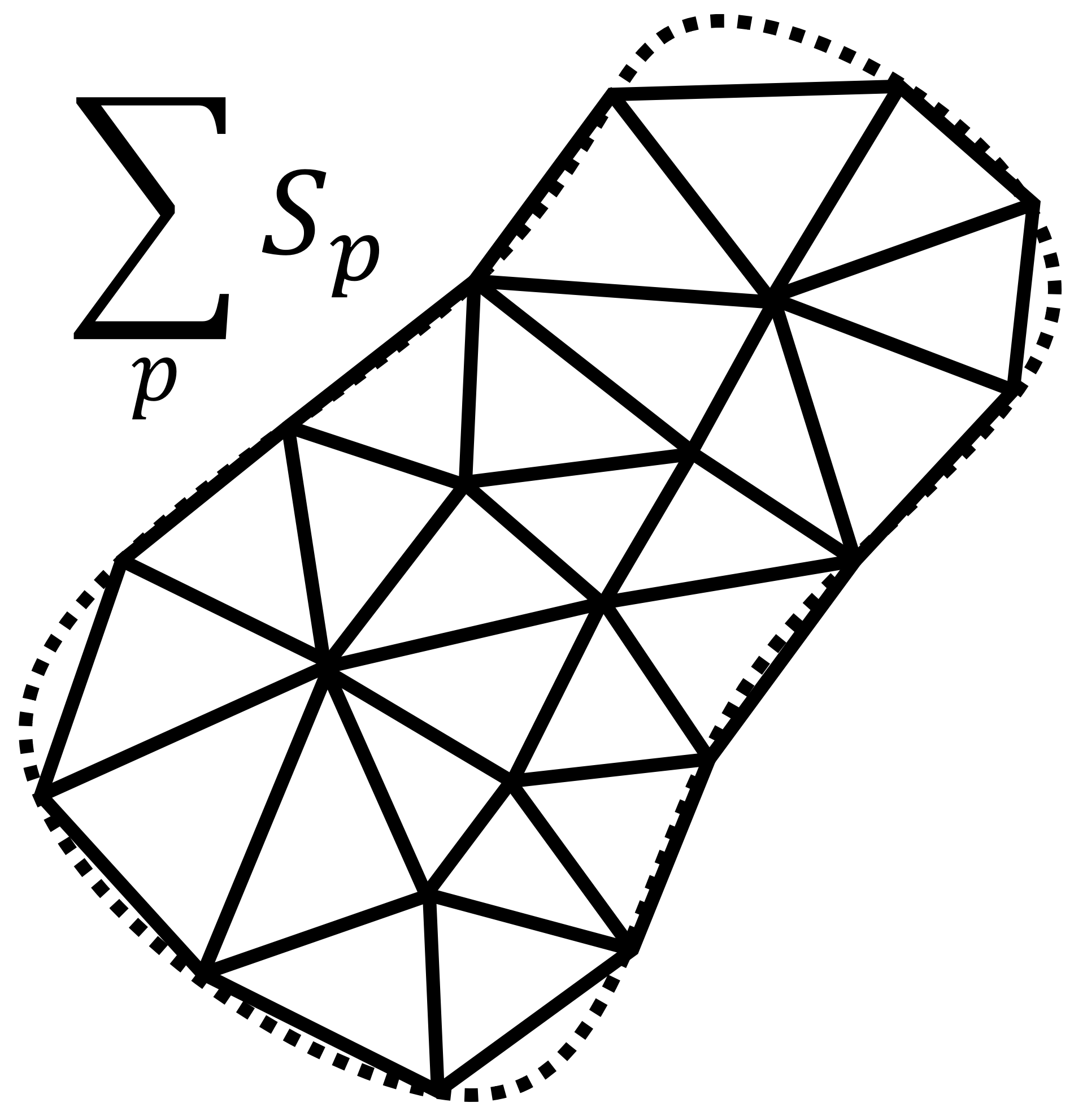}
		}
		\caption{Procedure of polygonal approximation and triangulation}    
		\label{polygon approximation and triangulation}
	\end{figure}
	In the procedure of polygonal approximation and triangulation to the domain $S_0$, the error is introduced by those triangles that have one edge adjacent to the curve boundary. We mark those triangles with $S_1,S_2,\cdots$ and the corresponding mismatching arch domain with $\Delta S_1,\Delta S_2,\cdots$, as shown in Figure \ref{Fig1}:
    \begin{figure}[H]
		\centering
		\includegraphics[width=0.4\textwidth]{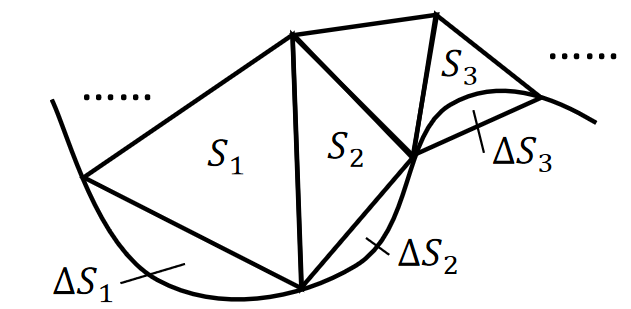}
		\caption{Error of polygon approximation} 
		\label{Fig1} 
	\end{figure}
	
	To estimate the error of polygon approximation, we first introduce a lemma from \cite{Numerical}.
	\begin{lemma}\label{interpolation}
		Let $f(x)$ have an $(n+1)$-th derivative, $f^{(n+1)}(x)$, in an interval $[a,b]$. Let $P_n(x)$ be the interpolation polynomial for f(x) with respect to $n+1$ distinct points $x_k,k=0,1,\cdots,n$ in the interval $[a,b]$(i.e. $P_n(x_k)=f(x_k)$ and $x_k\in[a,b]$). Then for each $x\in [a,b]$ there exists a point $\xi = \xi (x)$ in the open interval:
		\begin{equation*}
			\min(x_0,x_1,\ldots,x_n,x)<\xi<\max(x_0,x_1,\ldots,x_n,x),
		\end{equation*}
		such that
		\begin{equation*}
			f(x)-P_{n}(x)\equiv R_{n}(x) =\frac{(x-x_{0})(x-x_{1})\cdots(x-x_{n})}{(n+1)!}f^{(n+1)}(\xi).
		\end{equation*}
	\end{lemma}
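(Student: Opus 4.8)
The plan is to reduce the claim to a repeated use of Rolle's theorem through a single, cleverly chosen auxiliary function — the classical route to the Lagrange interpolation remainder. First I would dispose of the degenerate case: if $x$ equals one of the interpolation nodes $x_k$, then $R_n(x)=f(x_k)-P_n(x_k)=0$, while the product $(x-x_0)(x-x_1)\cdots(x-x_n)$ also vanishes, so the asserted identity holds with any $\xi$ in the stated open interval. Hence from now on assume $x\notin\{x_0,x_1,\ldots,x_n\}$, and set $w(t)=\prod_{k=0}^{n}(t-x_k)$, so that $w(x)\neq 0$.

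Next I would introduce, for $t\in[a,b]$, the auxiliary function
$$ g(t)=f(t)-P_n(t)-\frac{R_n(x)}{w(x)}\,w(t). $$
Because $f$ possesses an $(n+1)$-th derivative on $[a,b]$ (hence $f,f',\ldots,f^{(n)}$ are continuous there) and $P_n,w$ are polynomials, $g$ is $(n+1)$-times differentiable on $[a,b]$. The decisive observation is that $g$ vanishes at the $n+2$ distinct points $x_0,x_1,\ldots,x_n$ and $x$: at each node $g(x_k)=0$ since $f(x_k)=P_n(x_k)$ and $w(x_k)=0$, and $g(x)=R_n(x)-R_n(x)=0$ by the choice of the constant multiplying $w$.

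Then I would invoke the generalized Rolle theorem: since $g$ vanishes at $n+2$ distinct points, $g'$ vanishes at $n+1$ interior points, $g''$ at $n$ points, and so on, until $g^{(n+1)}$ vanishes at some $\xi$ lying in the open interval $\bigl(\min(x_0,\ldots,x_n,x),\ \max(x_0,\ldots,x_n,x)\bigr)$. Finally I would differentiate $g$ explicitly $n+1$ times: $P_n^{(n+1)}\equiv 0$ because $\deg P_n\leq n$, and $w^{(n+1)}\equiv(n+1)!$ because $w$ is monic of degree $n+1$, so $g^{(n+1)}(t)=f^{(n+1)}(t)-(n+1)!\,R_n(x)/w(x)$. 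Setting $g^{(n+1)}(\xi)=0$ and solving for $R_n(x)$ yields precisely the claimed formula.

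The step that requires the most care to phrase is the generalized Rolle argument: one must check that the zero produced at each successive differentiation stays strictly inside the open interval spanned by the original $n+2$ points, so that the final $\xi$ indeed lies between $\min(x_0,\ldots,x_n,x)$ and $\max(x_0,\ldots,x_n,x)$, and that the differentiability hypothesis on $f$ is exactly what is consumed at each stage. Everything else is a routine computation, and since the statement is quoted from \cite{Numerical} one may alternatively just cite it.
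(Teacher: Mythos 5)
Your proof is correct: the auxiliary function $g(t)=f(t)-P_n(t)-\frac{R_n(x)}{w(x)}w(t)$ with $w(t)=\prod_{k=0}^{n}(t-x_k)$, the count of $n+2$ zeros, repeated Rolle, and the computation $g^{(n+1)}(t)=f^{(n+1)}(t)-(n+1)!\,R_n(x)/w(x)$ is exactly the classical argument. The paper itself offers no proof of this lemma --- it is quoted directly from the cited reference \cite{Numerical} --- and your argument is the standard one found there, so nothing further is needed.
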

	As for the condition we need, we choose $n=1, x_0=a, x_n=b$, and immediately we can get
	\begin{equation*}
		\min(a,b,x)<\xi<\max(a,b,x),
	\end{equation*}
	\begin{equation*}
		f(x)-P_1(x)= R_1(x)=\frac{(x-a)(x-b)}{2}f''(\xi).
	\end{equation*}
	Based on the error estimate of piecewise linear interpolation, in each small triangle on the boundary, we can express the curve boundary as $f_k(t)=x_k(t)\mathbf{i}+y_k(t)\mathbf{j}$, where each component function is $p-1$-th order continuous since the boundary is represented by B-spline functions. The linear interpolation function $P_1^k(t)=P_{1x}^k(t)\mathbf{i}+P_{1y}^k(t)\mathbf{j}$ is used to describe the triangle edge adjacent to the curve boundary.
	Based on the Lemma \ref{interpolation}, we give a theorem to estimate the error of discretization.
	\begin{theorem}\label{integral estimate}
		When we approximate the integral on the domain $S_0$
		\begin{equation}
			U(x_i,y_i)=\iint\limits_{S_0}H(x_i-x_o,y_i-y_o) dx_ody_o,\label{bigU}
		\end{equation}
		by the sum of integral on each triangulation
		\begin{equation}
			\tilde{U}(x_i,y_i)=\sum_{p=1}^{N_T}\iint\limits_{S_p} H(x_i-x_o,y_i-y_o) dx_ody_o,\label{tildeU}
		\end{equation}
		we have error estimate as follows:
		\begin{equation*}
			|U(x_i,y_i)-\tilde{U}(x_i,y_i)|\leqslant C\max h_k^2\rightarrow 0\quad as \quad \max h_k^2\rightarrow0,
		\end{equation*}
		where $h_k$ represents the length of triangle edge adjacent to the curve boundary and $C$ is a constant independent of $h_k$.
	\end{theorem}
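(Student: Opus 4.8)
The plan is to localize the error to the boundary triangles and bound each mismatch contribution by the area of the corresponding arch domain $\Delta S_k$, then control that area by the linear-interpolation estimate applied componentwise to the spline parametrization. First I would observe that $U - \tilde U$ only picks up contributions from the thin arch regions $\Delta S_k$ between the curve boundary $\partial S_0$ and the polygonal edge, since the triangulation exactly tiles the polygon $S$; hence
\begin{equation*}
	|U(x_i,y_i)-\tilde U(x_i,y_i)| \leq \sum_k \iint\limits_{\Delta S_k} |H(x_i-x_o,y_i-y_o)|\,dx_o\,dy_o \leq \|H\|_{L^\infty(\mathbb{R}^2)} \sum_k |\Delta S_k|,
\end{equation*}
where $\|H\|_{L^\infty}$ is finite because $H(\rho) = J_1(2\pi\rho)/\rho$ extends to a bounded continuous function on $\mathbb{R}^2$ (it equals $\pi$ at $\rho=0$ and decays like $\rho^{-3/2}$). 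So the whole theorem reduces to showing $\sum_k |\Delta S_k| \leq C \max_k h_k^2$.

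Next I would estimate a single $|\Delta S_k|$. On the $k$-th boundary segment the curve is $\mathbf{g}(t) = (x(t),y(t))$ with $C^{p-1}$ (hence at least $C^1$, and for $p\geq 3$ at least $C^2$) components, and $P_1(t)$ is the chord interpolating $\mathbf{g}$ at the two endpoints of the segment. The arch $\Delta S_k$ is the region swept between chord and arc; its area is at most the length of the chord times the maximal transversal deviation $\max_t \|\mathbf{g}(t)-P_1(t)\|$. By Lemma \ref{interpolation} with $n=1$ applied to each component, $|x(t)-P_{1x}(t)| \leq \tfrac18 (\Delta t_k)^2 \|x''\|_\infty$ and similarly for $y$, where $\Delta t_k$ is the parameter length of the segment. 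Since the spline has bounded second derivatives on $[0,1]$ (a fixed $C^2$ function once the control points and knots are fixed), and $\Delta t_k$ is comparable to $h_k$ (the chord length) up to a constant depending only on the parametrization speed $\|\mathbf{g}'\|$, this gives $\max_t\|\mathbf{g}(t)-P_1(t)\| \leq C' h_k^2$ and therefore $|\Delta S_k| \leq h_k \cdot C' h_k^2 = C' h_k^3$. Summing over the $O(1/\max h_k)$ boundary segments — their number is of order the boundary length divided by $\min h_k$, which under a quasi-uniform mesh assumption is $O(1/\max h_k)$ — yields $\sum_k|\Delta S_k| \leq C (\max h_k)^2$, as claimed.

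The main obstacle I anticipate is the bookkeeping that converts a per-triangle cubic bound into the stated quadratic global bound: one needs the number of boundary triangles to scale like $(\max h_k)^{-1}$, which requires either a shape-regularity/quasi-uniformity hypothesis on the Delaunay mesh or a direct argument that $\sum_k h_k$ is bounded by the (fixed, finite) total arc length of $\partial S_0$ — the latter is the cleaner route, since $\sum_k h_k \leq \sum_k (\text{arc length of segment }k) = |\partial S_0|$, giving $\sum_k |\Delta S_k| \leq C'\max h_k \sum_k h_k \leq C' |\partial S_0| \max h_k$; wait, that is only first order, so to reach second order one genuinely must use $|\Delta S_k|\leq C' h_k^3$ together with $\sum h_k \leq |\partial S_0|$, giving $\sum_k |\Delta S_k| \leq C'(\max h_k)^2 \sum_k h_k/\max h_k \cdot \ldots$ — the precise way to close this is $\sum_k h_k^3 \leq (\max h_k)^2 \sum_k h_k \leq (\max h_k)^2 |\partial S_0|$, which is exactly the stated bound with $C = C'|\partial S_0|$. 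A secondary technical point is handling the endpoints where the parametrization speed could degenerate; I would assume (as is standard for B-spline boundary curves and implicit in the paper's setup) that $\mathbf{g}'$ is bounded away from zero so that $\Delta t_k \asymp h_k$ uniformly.
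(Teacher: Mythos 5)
Your argument is essentially the paper's own proof: both localize the error to the arch domains $\Delta S_k$, bound each arch area by the chord length times the linear-interpolation deviation from Lemma \ref{interpolation} (giving $|\Delta S_k|\leq c\,h_k^3$), bound $H$ by its supremum, and close with $\sum_k h_k^3 \leq (\max_k h_k)^2\sum_k h_k$, where $\sum_k h_k$ is controlled by the perimeter of the boundary curve --- exactly the route the paper takes. The only (minor) difference is that you make explicit the comparison $\Delta t_k \asymp h_k$ via a nondegenerate parametrization speed, a point the paper leaves implicit.
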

	\begin{proof}
		According to Lemma \ref{interpolation}, we can get
		\begin{equation*}
			||f(x)-P_1(x)||_\infty= ||R_1(x)||_\infty\leqslant c_0h^2,
		\end{equation*}
		where $c_0=max\{f''(\xi)\}/2,~h$ is the length of the interval for interpolation.
		For one arch domain on the boundary, we can use $f_k(t)$ to describe the curve boundary and $P_1^k(t)$ to describe the corresponding linear approximation.
	So that we have estimation:
	\begin{equation*}
		|\Delta S_k|\leqslant h_k d_k,
	\end{equation*}
	where $\Delta S_k$ represents the $k$-th arch domain, $|\Delta S_k|$ is the measure of $\Delta S_k$, $h_k$ is the length of the $k$-th linear interpolation edge on the boundary, and $d_k$ is the height of the $k$-th arch. $d_k$ can be bounded by the interpolation error estimate
	\begin{equation*}
		d_k\leqslant \max\limits_{t}\left\{|x_k(t)-P_{1x}^k(t)|+|y_k(t)-P_{1y}^k(t)|\right\}\leqslant c_1h_k^2,
	\end{equation*}
    where $c_1$ is some constant independent of $h_k$. An illustration of these variables and the estimation is shown in Figure \ref{4_1}:
    \begin{figure}[H]
		\centering
		\includegraphics[width=0.4\textwidth]{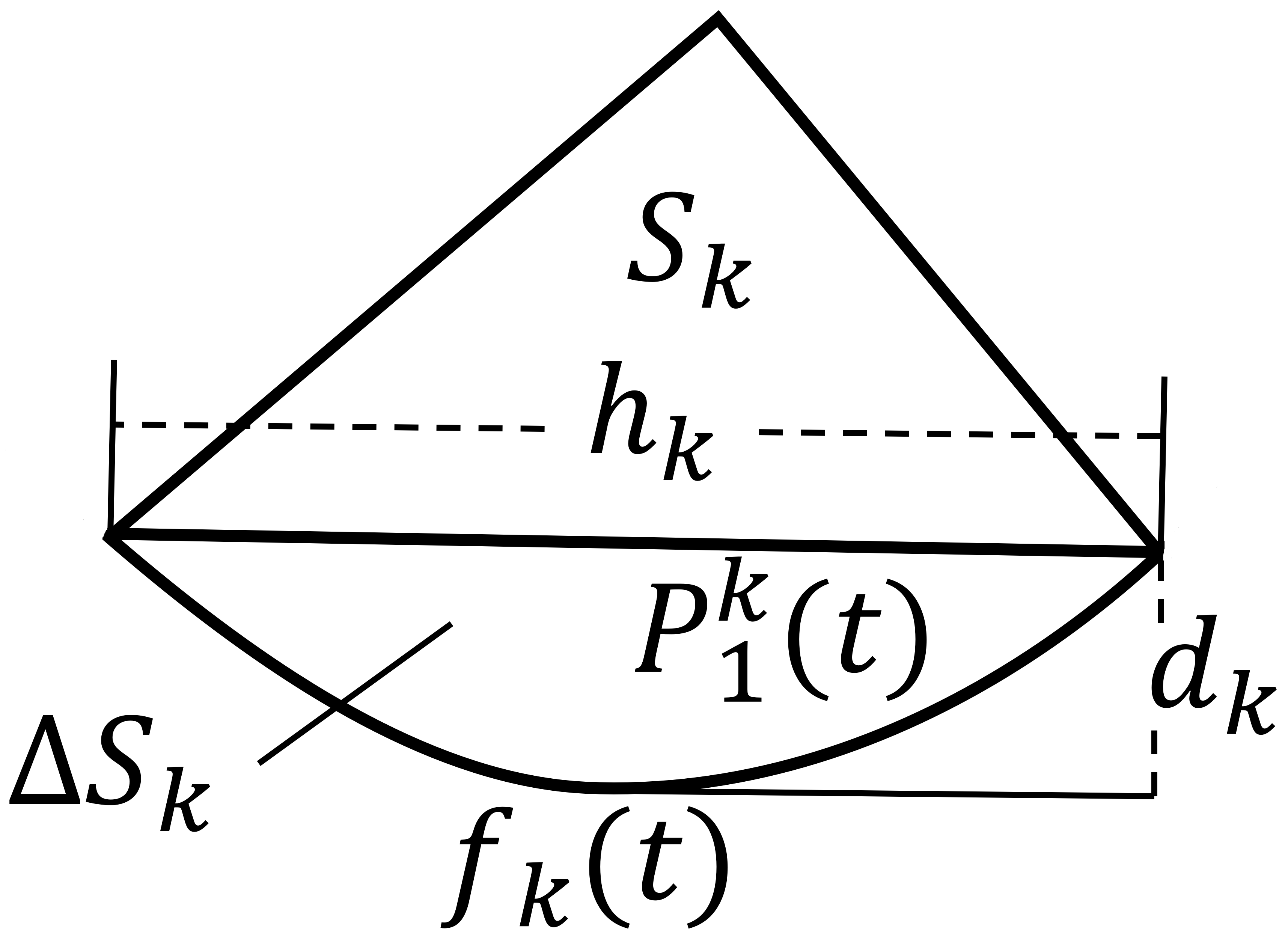}
		\caption{Error estimation in one arch} 
		\label{4_1} 
	\end{figure}

	So for the whole curve boundary, the upper bound for error of integral area can be written as
	\begin{equation}
		\sum_{k}|\Delta S_k|\leqslant \sum_{k}c_1h_k^3.\label{deltaS}
	\end{equation}
    
	Compare the two formulas \eqref{bigU}-\eqref{tildeU}, and consider that $\sum\limits_{k}h_k$ can be controlled by the perimeter of the curve boundary $C_{peri}$, we have the following estimation:
	\begin{equation*}
		\begin{aligned}
			|\Delta U(x_i,y_i)|&:=|U(x_i,y_i)-\tilde{U}(x_i,y_i)|\leqslant\sum\limits_{k}\left|\iint\limits_{\Delta S_k}H(x_i-x_o,y_i-y_o) dx_ody_o\right|\\
			&\leqslant\max |H(x_i-x_o,y_i-y_o)|~\sum\limits_{k}|\Delta S_k| \leqslant c_2\sum_{k}h_k^3\leqslant c_2\max\limits_{k} h_k^2\sum_{k}h_k\\
			&\leqslant c_2C_{peri}\max h_k^2:=C\max h_k^2\rightarrow 0\quad as \quad \max h_k^2\rightarrow0.
		\end{aligned}
	\end{equation*}
    Here we use \eqref{deltaS} in the second line of the above estimate and $c_2=c_1\max |H(x_i-x_o,y_i-y_o)|$.
   \end{proof} 
	With more vertices added to the boundary of the spline boundary and refinement of triangulation, it is easy to ensure that $\max h_k^2\rightarrow 0$, which means that the integral error caused by the polygonal approximation can be well controlled.

	\subsection{Numerical imaging formula}
	To numerically compute the integral on each triangle, we use the symmetric Gaussian quadrature rules provided in \cite{Dunavant}. Then with the help of the quadrature rules,  Eq. (\ref{U_triangle}) can be further approximated by
	\begin{equation}\label{numerical imaging}	U(x_i,y_i)\approx\sum_{p=1}^{N_T}\sum_{q=1}^{N_G}W_{G,q}H(x_i-x_o^{pq},y_i-y_o^{pq})|S_{p}|,\quad I(x_i,y_i)=|U(x_i,y_i)|^2,
	\end{equation}
	where $(x_o^{pq},y_o^{pq})$ is the coordinate of $q$-th Gaussian point in the $p$-th triangle in the triangulation of polygon $S$, $W_{G,q}$ is the weight of $q$-th Gaussian point. $S_{p}$ represents the $p$-th triangle, the measure of which is $|S_{p}|$. And $N_G$ is the total number of Gaussian points.
	
	If the $N_G$-point Gaussian quadrature formula used in this article has precision of degree $n$, which means the scheme can achieve exact result for all polynomials with degree less than or equal to $n$, the error estimation of the formula in one triangle can be represented as follows:
	\begin{equation}\label{integral error}
		\iint\limits_{S_{p}}H(x_i-x_o,y_i-y_o) dx_ody_o-\sum_{q=1}^{N_G}W_{G,q}H(x_i-x_o^{pq},y_i-y_o^{pq})|S_{p}|\leqslant Ch_p^{n+1},
	\end{equation}
	where $h_p$ is the length of the longest edge in the p-th triangle and $C$ is a constant independent of $h_p$. We should point out that for the polygon region $S$, if we assume that the family of triangulation is shape-regular quasi-uniform as defined in 
    \cite[Definition 3.2 and 3.3]{zmchen},   
    we can easily derive that the total number of triangles $N_T$ generated by triangulation has a quadratic relationship with the reciprocal of the maximum of edge length $h_p$, i.e. $N_T\sim O(1/\max h_p^2)$. Sum both sides of Eq. \eqref{integral error} over $p$ and we can get:
    \begin{equation}\label{errororder}
    \begin{aligned}
        \sum_{p=1}^{N_T}\iint\limits_{S_{p}}H(x_i-x_o,y_i-y_o) dx_ody_o&-\sum_{p=1}^{N_T}\sum_{q=1}^{N_G}W_{G,q}H(x_i-x_o^{pq},y_i-y_o^{pq})|S_{p}|\\
        \leqslant \sum_{p=1}^{N_T}Ch_p^{n+1}&\leqslant N_TC\max h_p^{n+1}\sim O(\max h_p^{n-1})
    \end{aligned}
    \end{equation}
    \begin{remark}\label{remark1}
        Theorem \ref{integral estimate} implies that the accuracy of polygon approximation is of 2nd order, and Eq. \eqref{errororder} indicates that the numerical integration formula has an accuracy of order $n-1$. To prevent the loss of accuracy, here the Gaussian quadrature formula we apply should have at least a precision of order 3. 
    \end{remark}
	
	However, the coordinates of Gaussian points cannot exist independently. They are determined by the coordinates of the vertex of the triangle in which they are located. Without loss of generality, we represent the three vertices in the $p$-th triangle by $R_{p,a},R_{p,b},R_{p,c}$. According to the areal coordinate representation of Gaussian points, for the $p$-th triangle, we can obtain the relation between vertex coordinates and Gaussian point coordinates:
	
	\begin{equation}\label{R_multiple_L}
			\mathbf{G}_p=\begin{pmatrix}
				x_o^{p1}&x_o^{p2}&\cdots&x_o^{pN_G}\\
				y_o^{p1}&y_o^{p2}&\cdots&y_o^{pN_G}	
			\end{pmatrix}
			=
			\begin{pmatrix}
				R_{p,ax}&R_{p,bx}&R_{p,cx}\\
				R_{p,ay}&R_{p,by}&R_{p,cy}		
			\end{pmatrix}
			\mathbf{L},
	\end{equation}
	where $\mathbf{G}_p\in\mathbb{R}^{2\times N_G}$ is the Gaussian point matrix in the $p$-th triangle. $\mathbf{L}\in\mathbb{R}^{3\times N_G}$ is the weight matrix of the integral points with respect to the vertices, each column of which determined the location of a Gaussian point.
	
	Then we consider how to build the relationship between the vertices of each triangle and the control points for periodic B-spline curve, so that we can explicitly compute the gradient later. For a given sequence of $n$ control points $P_1,P_2,\cdots,P_n$, we can generate a spline curve by these control points. Every point on the curve corresponds to a curve parameter. To discretize the curve and further approximate the region by polygon, we should arrange some points on the curve. Here we choose $m$ curve parameters $t_1,t_2,\cdots,t_m$ and obtain the discrete points $Q_1,Q_2,\cdots,Q_m$ on the curve boundary in the following way:
	\begin{equation}\label{P2Q}
		\begin{pmatrix}
			Q_1\\
			Q_2\\
			\vdots\\
			Q_m
		\end{pmatrix}=
		\begin{pmatrix}
			\mathring{N}_{1,p}(t_1)&\mathring{N}_{2,p}(t_1)&\cdots&\mathring{N}_{n,p}(t_1)\\
			\mathring{N}_{1,p}(t_2)&\mathring{N}_{2,p}(t_2)&\cdots&\mathring{N}_{n,p}(t_2)\\
			\vdots&\vdots&\ddots&\vdots\\
			\mathring{N}_{1,p}(t_m)&\mathring{N}_{2,p}(t_m)&\cdots&\mathring{N}_{n,p}(t_m)
		\end{pmatrix}
		\begin{pmatrix}
			P_1\\
			P_2\\
			\vdots\\
			P_n
		\end{pmatrix},
	\end{equation}
	where $t_k\in[0,1], k=1,2,\cdots,m$ and $t_k<t_{k+1},k=1,2,\cdots,m-1$. So that the coordinates of discrete points $Q_1,Q_2,...,Q_m$ can be represented by the linear combination of coordinates of control points. Now we can describe the process in matrix multiplication form:
	
	\begin{equation}\label{P2Qmatrix}
		\mathbf{Q}=\mathbf{NP},\quad \mathbf{Q}\in\mathbb{R}^{m\times2},\mathbf{N}\in\mathbb{R}^{m\times n},\mathbf{P}\in\mathbb{R}^{n\times 2}.
	\end{equation}
	
	The triangulation of domain $S_0$ is performed by Delaunay algorithm. To perform the triangulation, we should choose $m$ points $Q_1,Q_2,..., Q_m$ on the curve first to discretize the boundary curve. Then we perform Delaunay triangulation based on these points to obtain a rough triangular mesh, which does not have any inner vertices. After deleting triangles out of the boundary, the further triangulation of $S_0$ is performed by refinement of those initial rough meshes. 
    
    In the following, we consider the process of determining an inner triangle by the points $Q_1,Q_2,\cdots,Q_m$ on the spline curve.
    First, all the vertices of the triangles in the initial mesh are just $Q_1, Q_2,...,Q_m$, we rename them by $\tilde{R}_1,\tilde{R}_2,\cdots,\tilde{R}_m$. In matrix form we have:
	\begin{equation}\label{Q2R1}
		\begin{pmatrix}
			\tilde{R}_1\\
			\tilde{R}_2\\
			\vdots\\
			\tilde{R}_m
		\end{pmatrix}
		=
		\begin{pmatrix}
			1&&&\\
			&1&&\\
			&&\ddots&\\
			&&&1
		\end{pmatrix}
		\begin{pmatrix}
			Q_1\\
			Q_2\\
			\vdots\\
			Q_m
		\end{pmatrix}
		=\mathbf{I}_{m}
		\begin{pmatrix}
			Q_1\\
			Q_2\\
			\vdots\\
			Q_m
		\end{pmatrix}.
	\end{equation}
	Then, to refine the Delaunay triangulation, we need to add new points. For a triangle larger than our tolerance, whose three vertices are $\tilde{R}_{i_0},\tilde{R}_{j_0},\tilde{R}_{k_0}$, we add its center of gravity as a new point. If we call the new point $\tilde{R}_{m+1}$, then we can modify the link matrix between $\{\tilde{R}_k\}_{k=1}^{m+1}$ and $\{Q_k\}_{k=1}^{m}$ in the following way:
	\begin{equation}\label{Q2R2}
		\begin{pmatrix}
			\tilde{R}_1\\
			\tilde{R}_2\\
			\vdots\\
			\tilde{R}_{m+1}
		\end{pmatrix}
		=
		\begin{pmatrix}
			\mathbf{I}_m\\
			\frac{1}{3}(\mathbf{e}_{i_0}+\mathbf{e}_{j_0}+\mathbf{e}_{k_0})
		\end{pmatrix}
		\begin{pmatrix}
			\tilde{R}_1\\
			\tilde{R}_2\\
			\vdots\\
			\tilde{R}_{m}
		\end{pmatrix}
		=\mathbf{W}_{m+1,m}
		\begin{pmatrix}
			\tilde{R}_1\\
			\tilde{R}_2\\
			\vdots\\
			\tilde{R}_{m}
		\end{pmatrix}
		=\mathbf{W}_{m+1,m}\mathbf{I}_m
		\begin{pmatrix}
			Q_1\\
			Q_2\\
			\vdots\\
			Q_m
		\end{pmatrix},
	\end{equation}
	where $\mathbf{e}_{i_0}\in\mathbb{R}^{1\times m}$ is a row vector in which only the $i_0$-th element is $1$ and the other elements are $0$. After adding the new point, we can refine the triangulation. Repeat the process until every triangle is smaller than our tolerance. In this process, every new point added can be represented by the average of three vertices of the triangle located. Assuming that there are $K$ points in the final triangulation, we can get a series of link matrix $\mathbf{W}_{m+k+1,m+k}(k=0,1,\cdots,K-m-1)$, the definition of which is:
	\begin{equation}\label{Q2R3}
		\mathbf{W}_{m+k+1,m+k}=
		\begin{pmatrix}
			\mathbf{I}_{m+k}\\
			\frac{1}{3}(\mathbf{e}_{i_0}+\mathbf{e}_{j_0}+\mathbf{e}_{k_0})
		\end{pmatrix},
	\end{equation}
	where $i_0,j_0,k_0$ are the indexes of the points forming the triangle which needs to be refined in the $k$-th adding points.
	
	In this way, every vertex in the final triangulation can be represented by linear combinations of the points $Q_1,Q_2,\cdots,Q_m$ by the following formula:
	\begin{equation}\label{Q2R4}
		\begin{pmatrix}
			\tilde{R}_1\\
			\tilde{R}_2\\
			\vdots\\
			\tilde{R}_{K}
		\end{pmatrix}
		=
		\mathbf{W}_{K,K-1}\mathbf{W}_{K-1,K-2}\cdots\mathbf{W}_{m+1,m}\mathbf{I}_m
		\begin{pmatrix}
			Q_1\\
			Q_2\\
			\vdots\\
			Q_m
		\end{pmatrix}
		=\mathbf{W}\begin{pmatrix}
			Q_1\\
			Q_2\\
			\vdots\\
			Q_m
		\end{pmatrix},
	\end{equation}
	with the definition:
	\begin{equation}\label{Q2R5}
		\mathbf{W}=\mathbf{W}_{K,K-1}\mathbf{W}_{K-1,K-2}\cdots\mathbf{W}_{m+1,m}\mathbf{I}_m.
	\end{equation}
	
	As a result, every vertex of each triagnle can be represented in the form of matrix multiplication:
	\begin{equation}\label{Q2R6}
		\tilde{\mathbf{R}}=\mathbf{WQ},\quad \tilde{\mathbf{R}}\in\mathbb{R}^{K\times2}, \mathbf{W}\in\mathbb{R}^{K\times m},\mathbf{Q}\in\mathbb{R}^{m\times 2},
	\end{equation}
	where $\mathbf{W}$ is the coefficient of linear combination.
	
	After refining the Delaunay triangulation, we will obtain all point coordinates $\tilde{\mathbf{R}}$ and connectivity list $\mathbf{C}\in\mathbb{R}^{N_T\times 3}$, each row of $\mathbf{C}$ consists of indexes of the three vertexes of a single triangle. Then we will reshape the matrix $\tilde{\mathbf{R}}$ into $\mathbf{R}$ according to the connectivity list. An intuitive structure is:
	\begin{equation}\label{reshape1}
		\mathbf{R}(:,:,1)=
		\begin{pmatrix}
			R_{1,ax}&R_{1,bx}&R_{1,cx}\\
			R_{2,ax}&R_{2,bx}&R_{2,cx}\\
			\vdots&\vdots&\vdots\\
			R_{N_T,ax}&R_{N_T,bx}&R_{N_T,cx}\\
		\end{pmatrix},
		\mathbf{R}(:,:,2)=
		\begin{pmatrix}
			R_{1,ay}&R_{1,by}&R_{1,cy}\\
			R_{2,ay}&R_{2,by}&R_{2,cy}\\
			\vdots&\vdots&\vdots\\
			R_{N_T,ay}&R_{N_T,by}&R_{N_T,cy}\\
		\end{pmatrix}.
	\end{equation}
	$\mathbf{R}$ is a three-dimensional matrix, written $\mathbf{R}\in\mathbb{R}^{N_T\times3\times2}$. $\mathbf{R}(:,:,1)$ stores the X-coordinate information for each point, and $\mathbf{R}(:,:,2)$ stores the Y-coordinate information for each point. Each row includes the information of three vertices of a single triangle. An easy way to carry out the reshaping process is to assign values in $\tilde{\mathbf{R}}$ by the indexes in $\mathbf{C}$. We note the procedure by 
	\begin{equation}\label{reshape2}
		\mathbf{R}=\tilde{\mathbf{R}}(\mathbf{C}), \mathbf{R}\in\mathbb{R}^{N_T\times3\times2}, \tilde{\mathbf{R}}\in\mathbb{R}^{K\times2}, \mathbf{C}\in\mathbb{R}^{N_T\times 3}.
	\end{equation}
	
	As mentioned above, to obtain the coordinates of Gaussian points, we only need to multiply a weight matrix connecting the vertices and the Gaussian points $\mathbf{L}$ to the right, which makes our calculations a lot easier. And finally, we have successfully established the process between Gaussian points and control points. An illustration of the process is shown in Figure \ref{PQR}.
	\begin{figure}[H]
		\centering
		\includegraphics[width=0.4\textwidth]{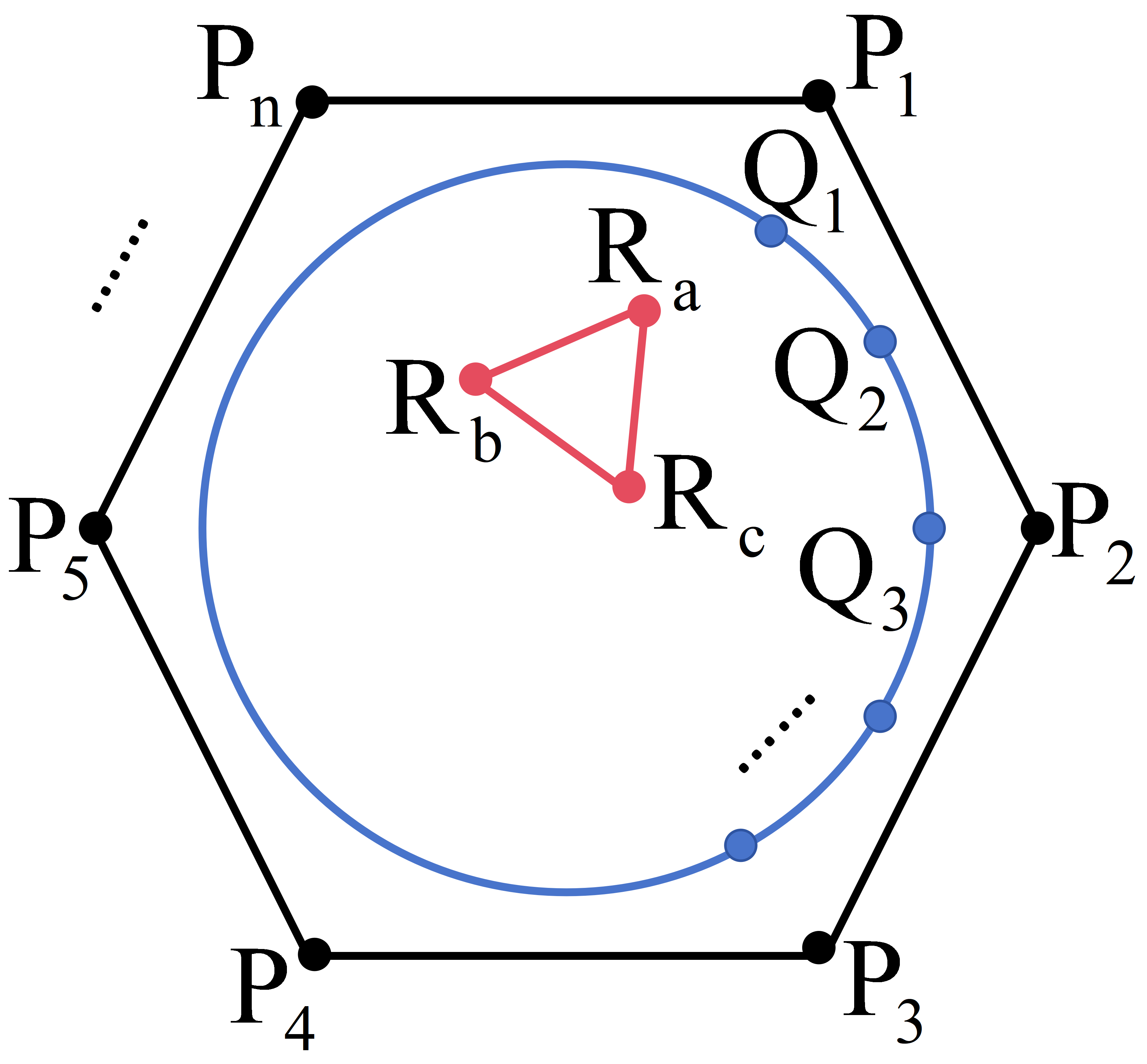}
		\caption{explicit relation building} 
		\label{PQR} 
	\end{figure}
	
	For the sampling points on the image plane, we usually choose them equidistantly to simplify the following data visualization. In this way, we finally finished the numerical imaging formula derivation.
	\subsection{Inverse lithography problem}
    
	After finishing the derivation of imaging formula and obtaining the explicit relationship between the control points and the Gaussian points, we are in a position to discuss the inverse lithography problem. We remark that normalization of coordinates has no essential effect on the inverse problem. So in this part, we will still omit the $\hat{ }$ symbol. 
	
	First, we will introduce the optimization model of the inverse lithography problem. We note the intensity distribution function obtained by numerical computation as $I(x_i,y_i)=|U(x_i,y_i)|^2=U(x_i,y_i)*\overline{U(x_i,y_i)}$, where $(x_i,y_i)$ represents the coordinate on the image plane. Taking into account the effects of photoresist on the wafer, in the optical lithography simulation, a widely used model is to truncate the intensity by a threshold function:
	\begin{equation*}
		T(x)=
		\begin{cases}
			1\quad x\geqslant tr,\\
			0\quad x<tr,
		\end{cases}
	\end{equation*}
	where $tr$ is the threshold. However, the hard truncation function is discontinuous, which leads to the difficulty of calculating its gradient. So we will approximate it by sigmoid function:
	\begin{equation*}
		sig(x)=\frac{1}{1+exp(-a(x-tr))},
	\end{equation*}
	where $a$ controls the approximation accuracy. Given the target intensity distribution function $I_{target}(x_i,y_i)$ in a binary way, then the mask optimization problem can be reduced to an unconstrained optimization problem:
	\begin{equation*}
		\min\limits_{P_1,\cdots,P_n} J:=||sig(I)-I_{target}||^2_2=\iint_{\Omega}(sig(I)-I_{target})^2dxdy,
	\end{equation*}
	where domain $\Omega$ is the whole image plane. 
	Further we will calculate the derivative of the objective function with respect to the horizontal and vertical coordinates $P_{1x},P_{2x},\cdots,P_{nx}$ and $P_{1y},P_{2y},\cdots,P_{ny}$ of the control points $P_1,P_2,\cdots,P_n$:
	\begin{equation*}
		\left(
		\begin{aligned}
			\frac{\partial J}{\partial P_{kx}}\\
			\frac{\partial J}{\partial P_{ky}}
		\end{aligned}
		\right)
		=
		\left(
		\begin{aligned}
			\iint\limits_{\Omega}2(sig(U\overline{U})-I_{target})sig'(U\overline{U})(\overline{U}\frac{\partial U}{\partial  P_{kx}}+U\frac{\partial\overline{U}}{\partial  P_{kx}})dx_idy_i\\
			\iint\limits_{\Omega}2(sig(U\overline{U})-I_{target})sig'(U\overline{U})(\overline{U}\frac{\partial U}{\partial P_{ky}}+U\frac{\partial\overline{U}}{\partial P_{ky}})dx_idy_i
		\end{aligned}
		\right), \quad k=1,2,\cdots,n.
	\end{equation*}
	The integral is carried out in the image plane. Now we discretize the integrals equidistantly on the image plane by $N_x\times N_y$ sampling points $(x_i^m,y_i^n), m=1,2,\cdots,N_x, n=1,2,\cdots,N_y$:
    \begin{equation}\label{grad1}
		\begin{aligned}
            \sum\limits_{m,n=1}^{N_{x},N_{y}}2\left(sig(U(x_i^m,y_i^n)\overline{U(x_i^m,y_i^n)})-I_{target}(x_i^m,y_i^n))sig'(U(x_i^m,y_i^n)\overline{U(x_i^m,y_i^n)}\right)\\
			\left(\overline{U(x_i^m,y_i^n)}\frac{\partial U(x_i^m,y_i^n)}{\partial  P_{kx}}+U(x_i^m,y_i^n)\frac{\partial\overline{U(x_i^m,y_i^n)}}{\partial P_{kx}}\right)\Delta x_i\Delta y_i,\\
            \sum\limits_{m,n=1}^{N_{x},N_{y}}2\left(sig(U(x_i^m,y_i^n)\overline{U(x_i^m,y_i^n)})-I_{target}(x_i^m,y_i^n))sig'(U(x_i^m,y_i^n)\overline{U(x_i^m,y_i^n)}\right)\\
			\left(\overline{U(x_i^m,y_i^n)}\frac{\partial U(x_i^m,y_i^n)}{\partial  P_{ky}}+U(x_i^m,y_i^n)\frac{\partial\overline{U(x_i^m,y_i^n)}}{\partial  P_{ky}}\right)\Delta x_i\Delta y_i.
		\end{aligned}
    \end{equation}
		Considering that conjugated functions can be directly transformed into each other, the problem can be reduced to calculating the explicit expression of $\frac{\partial U}{\partial P_{kx}}$ and $\frac{\partial U}{\partial P_{ky}}$.
	
	We come back to the formula Eq. \ref{numerical imaging} that computes the intensity on the image plane:
	\begin{equation}\label{grad2}
		U(x_i,y_i)=\sum_{p=1}^{N_T}\sum_{q=1}^{N_G}W_{G,q}H(x_i-x_o^{pq},y_i-y_o^{pq})|S_{p}|,\quad I(x_i,y_i)=|U(x_i,y_i)|^2.
	\end{equation}
	In this formula, $H$ and $|S_{p}|$ are related to the coordinates of the control points. The expression of derivative can be written as:
	\begin{equation}\label{grad3}
    \begin{aligned}
        \frac{\partial U(x_i^m,y_i^n)}{\partial P_{kx}}=\sum_{p=1}^{N_T}\sum_{q=1}^{N_G}W_{G,q}\left[\frac{\partial H(x_i^m-x_o^{pq},y_i^n-y_o^{pq})}{\partial P_{kx}}|S_{p}|+H(x_i^m-x_o^{pq}, y_i^n-y_o^{pq})\frac{\partial |S_{p}|}{\partial P_{kx}}\right],\\
        \frac{\partial U(x_i^m,y_i^n)}{\partial P_{ky}}=\sum_{p=1}^{N_T}\sum_{q=1}^{N_G}W_{G,q}\left[\frac{\partial H(x_i^m-x_o^{pq},y_i^n-y_o^{pq})}{\partial P_{ky}}|S_{p}|+H(x_i^m-x_o^{pq}, y_i^n-y_o^{pq})\frac{\partial |S_{p}|}{\partial P_{ky}}\right].
    \end{aligned}
	\end{equation}
	The problem can be further reduced to the calculation of the following four derivatives:
	\begin{equation*}
		\frac{\partial |S_{p}|}{\partial P_{kx}},\quad
		\frac{\partial |S_{p}|}{\partial P_{ky}},\quad
		\frac{\partial H(x_i-x_o^{pq},y_i-y_o^{pq})}{\partial P_{kx}},\quad
		\frac{\partial H(x_i-x_o^{pq},y_i-y_o^{pq})}{\partial P_{ky}}.
	\end{equation*}
	
	We start from the calculation of $\frac{\partial |S_{p}|}{\partial P_{kx}},\frac{\partial |S_{p}|}{\partial P_{ky}}$. If we assume that the three vertexes of the $p$-th triangle are $R_{p,a},R_{p,b},R_{p,c}$ and they have been arranged in counterclockwise order, then the absolute value operation in the area formula can be omitted and it becomes:
	\begin{equation}\label{grad4}
		|S_{p}|=\frac{1}{2}((R_{p,bx}-R_{p,ax})(R_{p,cy}-R_{p,ay})-(R_{p,by}-R_{p,ay})(R_{p,cx}-R_{p,ax})).
	\end{equation}
	In this way, the derivatives of $|S_p|$ become:
	\begin{equation}\label{grad5}
    \begin{aligned}
        \frac{\partial |S_{p}|}{\partial P_{kx}}=\frac{1}{2}((\frac{\partial R_{p,bx}}{\partial P_{kx}}-\frac{\partial R_{p,ax}}{\partial P_{kx}})(R_{p,cy}-R_{p,ay})-(R_{p,by}-R_{p,ay})(\frac{\partial R_{p,cx}}{\partial P_{kx}}-\frac{\partial R_{p,ax}}{\partial P_{kx}})),\\
        \frac{\partial |S_{p}|}{\partial P_{ky}}=\frac{1}{2}((R_{p,bx}-R_{p,ax})(\frac{\partial R_{p,cy}}{\partial P_{ky}}-\frac{\partial R_{p,ay}}{\partial P_{ky}})-(\frac{\partial R_{p,by}}{\partial P_{ky}}-\frac{\partial R_{p,ay}}{\partial P_{ky}})(R_{p,cx}-R_{p,ax})).
    \end{aligned}
	\end{equation}
	There are six derivatives connecting the vertexes and the control points. Here we illustrate the calculation of derivatives using $\frac{\partial R_{p,ax}}{\partial P_{kx}}$ and $\frac{\partial R_{p,ay}}{\partial P_{ky}}$ as an example. 
	
	As mentioned above, in the process of generating matrix $\mathbf{R}$, we will obtain $\tilde{\mathbf{R}}=\mathbf{WNP}$ and connectivity list matrix $\mathbf{C}$. $R_{p,ax}$ is the $(p,1,1)$-th entry in $\mathbf{R}$, which comes from the $(\mathbf{C}(p,1),1)$-th entry in $\tilde{\mathbf{R}}$. And $R_{p,ay}$ is the $(p,1,2)$-th entry in $\mathbf{R}$, which comes from the $(\mathbf{C}(p,1),2)$-th entry in $\tilde{\mathbf{R}}$. If we note $\mathbf{T}=\mathbf{WN}\in\mathbb{R}^{K\times n}$, then we can get:
	\begin{equation}\label{grad6}
		\frac{\partial R_{p,ax}}{\partial P_{kx}}=\frac{\partial R_{p,ay}}{\partial P_{ky}}=\mathbf{T}(\mathbf{C}(p,1),k), \quad k=1,2,\cdots,n.
	\end{equation}
	Similarly, we can obtain:
	\begin{equation}\label{grad7}
		\frac{\partial R_{p,bx}}{\partial P_{kx}}=\frac{\partial R_{p,by}}{\partial P_{ky}}=\mathbf{T}(\mathbf{C}(p,2),k),\quad \frac{\partial R_{p,cx}}{\partial P_{kx}}=\frac{\partial R_{p,cy}}{\partial P_{ky}}=\mathbf{T}(\mathbf{C}(p,3),k), \quad k=1,2,\cdots,n.
	\end{equation}
	Now we have finished the calculation of $\frac{\partial |S_{p}|}{\partial P_{kx}},\frac{\partial |S_{p}|}{\partial P_{ky}}$. Then we consider the calculation of the last two derivatives. In the preliminary section, we have already obtained the formula of optical system transfer function in Eq.(\ref{H}). Here we also omitted the $\hat{}$ symbol that represents the normalization and we get:
	\begin{equation}\label{grad8}
		H(x_i-x_o,y_i-y_o)=\frac{J_1(2\pi\rho)}{\rho}=\frac{J_1(2\pi\sqrt{(x_i-x_o)^2+(y_i-y_o)^2})}{\sqrt{(x_i-x_o)^2+(y_i-y_o)^2})}.
	\end{equation}
	Then the two derivatives can be directly written as:
	\begin{equation}\label{grad9}
    \begin{aligned}
        \frac{\partial H(x_i-x_o^{pq},y_i-y_o^{pq})}{\partial P_{kx}}=\left[\frac{\partial J_1(2\pi\rho)}{\partial \rho}\frac{\partial\rho}{\partial P_{kx}}\rho-J_1(2\pi\rho)\frac{\partial\rho}{\partial P_{kx}}\right]\Big/\rho^2,\\
        \frac{\partial H(x_i-x_o^{pq},y_i-y_o^{pq})}{\partial P_{ky}}=\left[\frac{\partial J_1(2\pi\rho)}{\partial \rho}\frac{\partial\rho}{\partial P_{ky}}\rho-J_1(2\pi\rho)\frac{\partial\rho}{\partial P_{ky}}\right]\Big/\rho^2.
    \end{aligned}	
	\end{equation}
	In the formula, terms that we need to further explain are:
	\begin{equation*}
		\frac{\partial J_1(2\pi\rho)}{\partial \rho},\quad
		\frac{\partial\rho}{\partial P_{kx}},\quad
		\frac{\partial\rho}{\partial P_{ky}}.
	\end{equation*}
	Using the recursion property of the first kind of Bessel function we can get:
	\begin{equation*}
		J_{n-1}(x)-J_{n+1}(x)=2J_n'(x).
	\end{equation*}
	Then the first term can be directly calculated by:
	\begin{equation}\label{grad10}
		\frac{\partial J_1(2\pi\rho)}{\partial \rho}=2\pi J_1'(2\pi\rho)=\pi \left[J_{0}(2\pi\rho)-J_{2}(2\pi\rho)\right].
	\end{equation}
	As for the latter two terms, we can get:
	\begin{equation}\label{grad11}
		\frac{\partial\rho}{\partial P_{kx}}=\frac{x_o^{pq}-x_i^m}{\rho}\frac{\partial x_o^{pq}}{\partial P_{kx}},\quad
		\frac{\partial\rho}{\partial P_{ky}}=\frac{y_o^{pq}-y_i^n}{\rho}\frac{\partial y_o^{pq}}{\partial P_{ky}}.
	\end{equation}
	Recall Eq. (\ref{R_multiple_L}), in which we have:
	\begin{equation}\label{grad12}
		\begin{pmatrix}
			x_o^{p1}&x_o^{p2}&\cdots&x_o^{pN_G}\\
			y_o^{p1}&y_o^{p2}&\cdots&y_o^{pN_G}	
		\end{pmatrix}
		=
		\begin{pmatrix}
			R_{p,ax}&R_{p,bx}&R_{p,cx}\\
			R_{p,ay}&R_{p,by}&R_{p,cy}		
		\end{pmatrix}
		\mathbf{L}.
	\end{equation}
	It can be derived directly that:
	\begin{equation}\label{grad13}
		\left(
		\begin{aligned}
			\frac{\partial x_o^{p1}}{\partial P_{kx}}\quad\frac{\partial x_o^{p2}}{\partial P_{kx}}\quad\cdots\quad\frac{\partial x_o^{pN_G}}{\partial P_{kx}}\\
			\frac{\partial y_o^{p1}}{\partial P_{kx}}\quad\frac{\partial y_o^{p2}}{\partial P_{kx}}\quad\cdots\quad\frac{\partial y_o^{pN_G}}{\partial P_{kx}}	
		\end{aligned}
		\right)
		=
		\left(
		\begin{aligned}
			\frac{\partial R_{p,ax}}{\partial P_{kx}}\quad\frac{\partial R_{p,bx}}{\partial P_{kx}}\quad\frac{\partial R_{p,cx}}{\partial P_{kx}}\\
			\frac{\partial R_{p,ay}}{\partial P_{ky}}\quad\frac{\partial R_{p,by}}{\partial P_{ky}}\quad\frac{\partial R_{p,cy}}{\partial P_{ky}}		
		\end{aligned}
		\right)
		\mathbf{L}.
	\end{equation}
	
	Now we have obtained all the derivatives that need to be calculated, and finally obtained the gradient of the original optimization problem. The method can be extended to multiple separate regions. The only point need to be considered is that the amplitude $U$ should be the sum of amplitude produced by each region, but the derivatives $\frac{\partial U}{\partial P_{kx}},\frac{\partial U}{\partial P_{ky}}$ are only influenced by the region that the control points belong to.

	\section{Algorithm}\label{algorithm}
    With the information of $\frac{\partial J}{\partial P_{kx}},\frac{\partial J}{\partial P_{ky}},k=1,2,\cdots,n$, in fact we have obtained the gradient information $\nabla J$ of target function $J$ and so that we can solve the optimization problem by various gradient-based methods, e.g., steepest descent method and conjugate gradient method. In this article we apply steepest descent(SD) method to set the descent direction and golden section method to set the step size $\alpha_k$. The basic iteration formula is:
    \begin{equation*}
        \mathbf{P}_{k+1}=\mathbf{P}_k-\alpha_k\nabla J(\mathbf{P}_k).
    \end{equation*}
    One thing needs to be explained in particular is that, in the process of optimization, the shape of the domain could twist violently, and the mesh will probably intersect. So after every step, we should regenerate the mesh and corresponding matrices. Based on these ideas, we introduced the procedure of our method in Algorithm 1
	\begin{algorithm}[htbp]
		\caption{Inverse lithography based on periodic B-splines}\label{alg:cap}
		\begin{algorithmic}
			\Require ~~\\ 
			Maximum number of iterations $I_M$\\ Accuracy level $\epsilon$\\
			Wave length $\lambda_0$\\
			Numerical aperture $NA$\\
			Lateral magnification $M$\\
			Initial control points $\mathbf{P}_0$\\
			Target mask pattern $I_{target}$\\
			Curve parameter $t_1,t_2,\cdots,t_m$
			\Ensure Optimized control points $\mathbf{P}$
			\State normalize the control points $\mathbf{P}_0$ by Eq.(\ref{normalize})
			\State $\mathbf{P}\gets\mathbf{P}_0$
			\State generate discrete points on the periodic B-spline curve by Eq.(\ref{P2Q}) and obtain $\mathbf{Q}$, $\mathbf{N}$ defined in Eq.(\ref{P2Qmatrix})
			\State Delaunay triangulation and refinement to obtain $\tilde{\mathbf{R}},\mathbf{C}$ and $\mathbf{W}$ by Eq.(\ref{Q2R1}-\ref{Q2R6})
			\State reshape $\tilde{\mathbf{R}}$ into $\mathbf{R}$ by Eq.(\ref{reshape1},\ref{reshape2})
			\State $k\gets1$
			\While{$J=||sig(I_{real})-I_{target}||^2_2>\epsilon $ \&\& $k<I_M$}
			\State $k\gets k+1$
			\State compute $\nabla J$ by Eq. (\ref{grad1}-\ref{grad13})
			\State determine the optimal step size $\alpha_k$
			\If{$\alpha_k<\epsilon$}
			\State inverse normalize $\mathbf{P}$ by Eq.(\ref{inverse normalization})
			\State return $\mathbf{P}$
			\ElsIf{$\alpha_k\geq\epsilon$}
			\State $\mathbf{P}\gets\mathbf{P}-\alpha_k\nabla J$
			\State regenerate $\mathbf{N},\mathbf{Q},\mathbf{C},\mathbf{W},\tilde{\mathbf{R}},\mathbf{R}$
			\EndIf
			\EndWhile
			\State inverse normalization $\mathbf{P}$ by Eq.(\ref{inverse normalization})
			\State return $\mathbf{P}$ 
		\end{algorithmic}
	\end{algorithm}

\section{Numerical Results}\label{numerical}
In this section, we present some numerical examples for a variety of mask patterns to show the efficiency of our proposed algorithm. We implement Algorithm \ref{alg:cap} by Matlab R2024b. 
During the numerical calculation, the common parameters that we selected are $a=90, tr=0.3, M=-1, \lambda_0=193nm, NA=0.93$. In order to ensure that the boundary has a certain smoothness, the order of periodic B-spline we selected is $p=3$. The stop criterion is set by $\epsilon=1e-4$ and $I_M=100$. As we highlighted in Remark \ref{remark1}, here we choose a 4 point triangular Gaussian quadrature formula with precision of degree 3. In order to see the difference between the target and the output more clearly, we provide the information of edge placement error(EPE) in each example.

	\begin{figure}[htbp]
		\centering
		
		\begin{subfigure}[b]{0.32\textwidth}
			\centering
			\includegraphics[width=\textwidth]{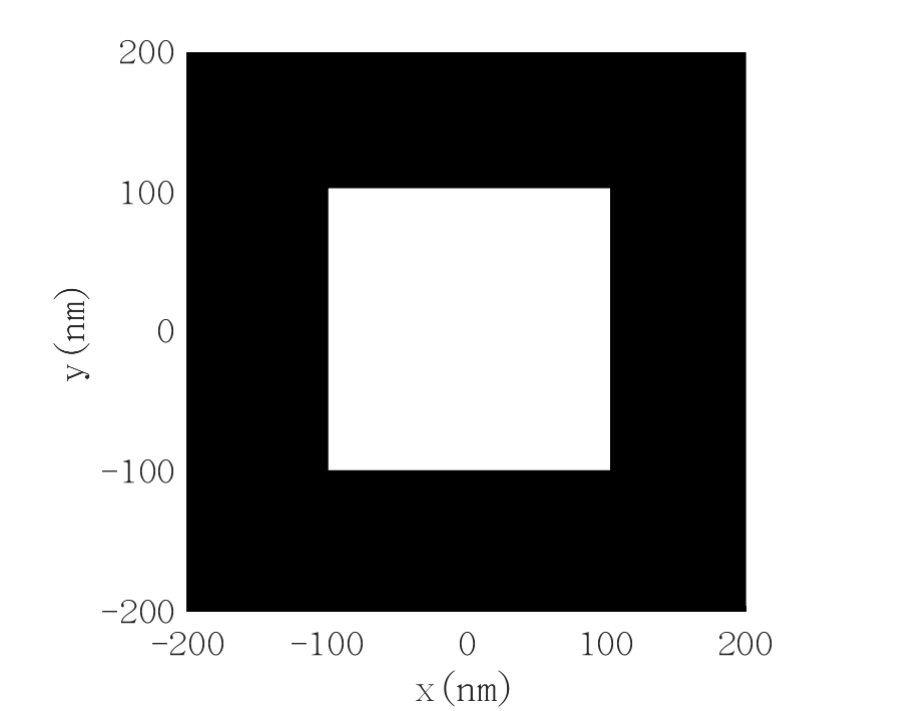}
			\caption{target pattern}
\label{fig1_1}
		\end{subfigure}
		\begin{subfigure}[b]{0.32\textwidth}
			\centering
			\includegraphics[width=\textwidth]{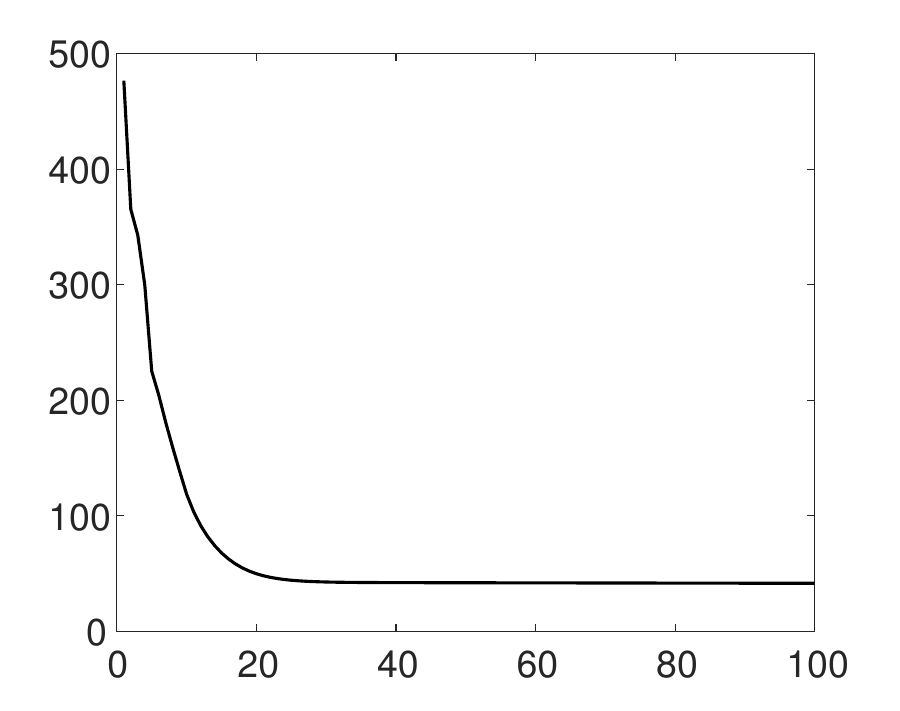}
			\caption{error decreasing}
			\label{fig1_2}
		\end{subfigure}
		
		\begin{subfigure}[b]{0.32\textwidth}
			\centering
			\includegraphics[width=\textwidth]{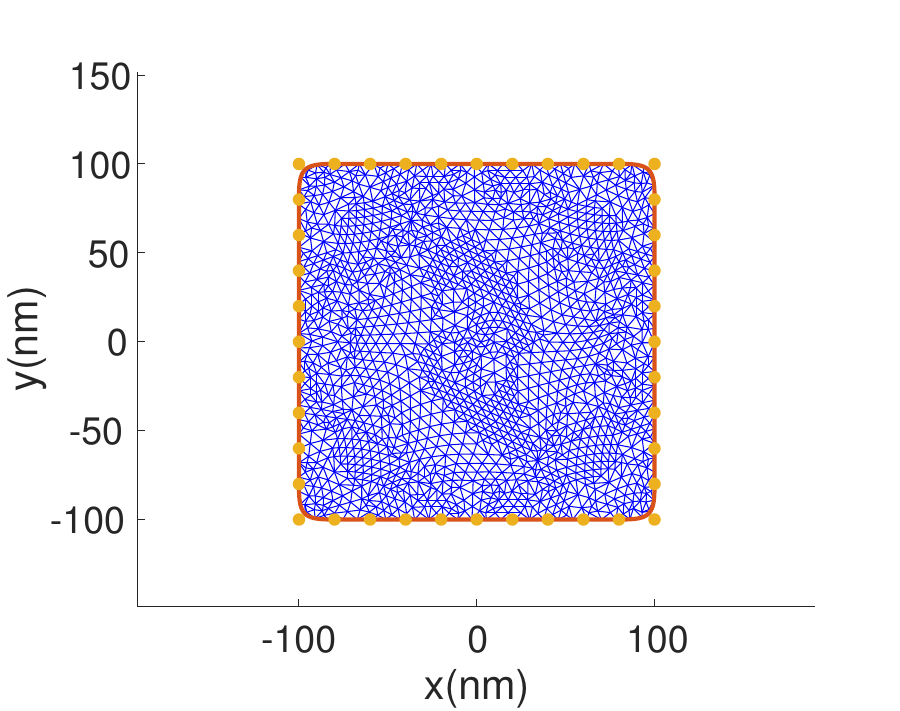}
			\caption{initial mask}
			\label{fig1_3}
		\end{subfigure}
		\begin{subfigure}[b]{0.32\textwidth}
			\centering
			\includegraphics[width=\textwidth]{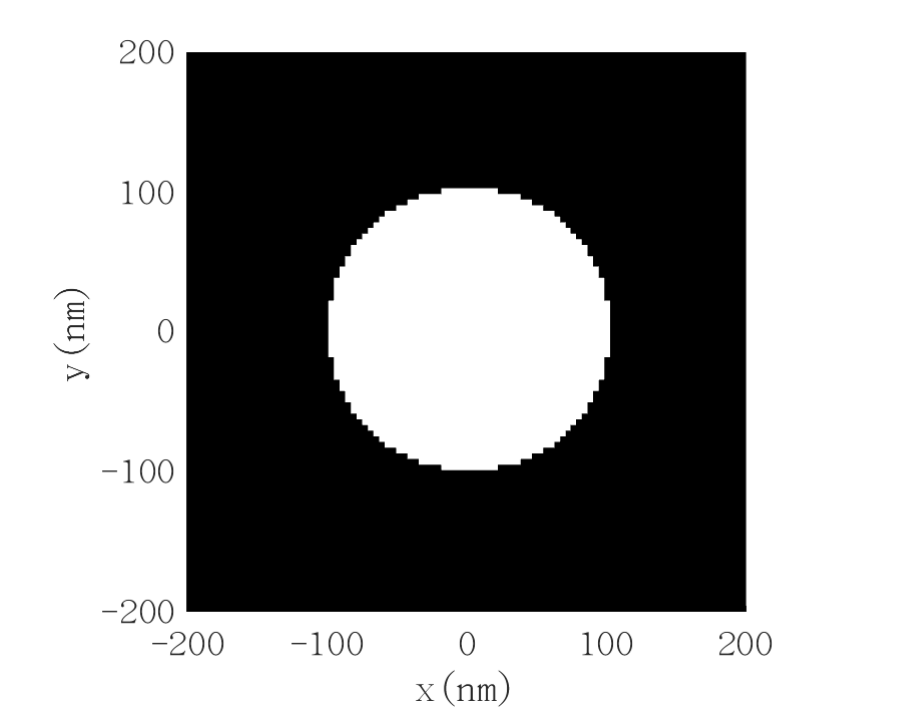}
			\caption{output of initial mask}
			\label{fig1_4}
		\end{subfigure}
		\begin{subfigure}[b]{0.32\textwidth}
			\centering
			\includegraphics[width=\textwidth]{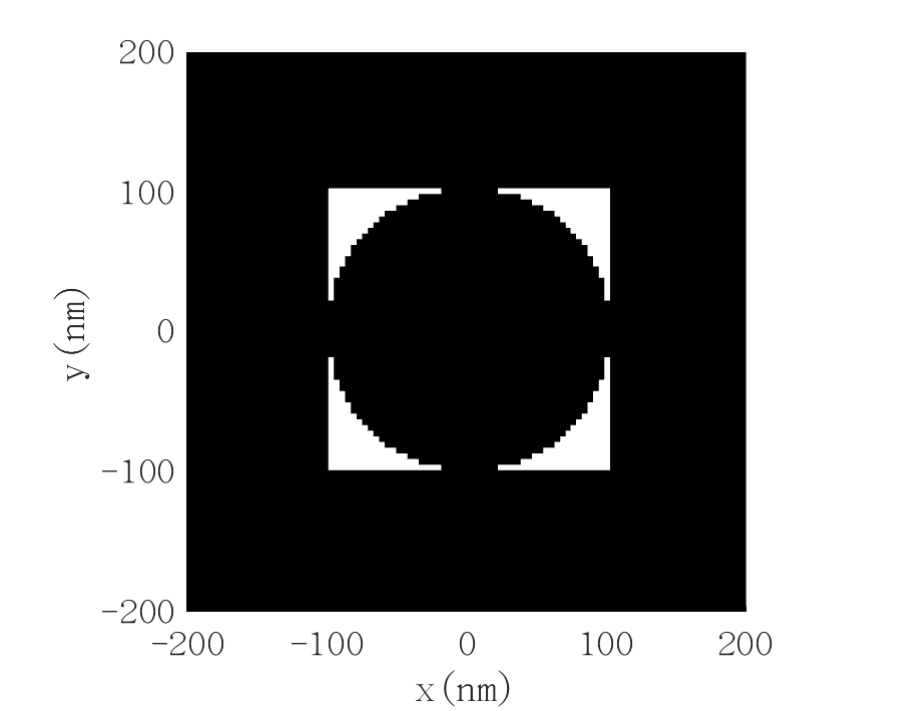}
			\caption{initial EPE}
			\label{fig1_5}
		\end{subfigure}
		
		\begin{subfigure}[b]{0.32\textwidth}
			\centering
			\includegraphics[width=\textwidth]{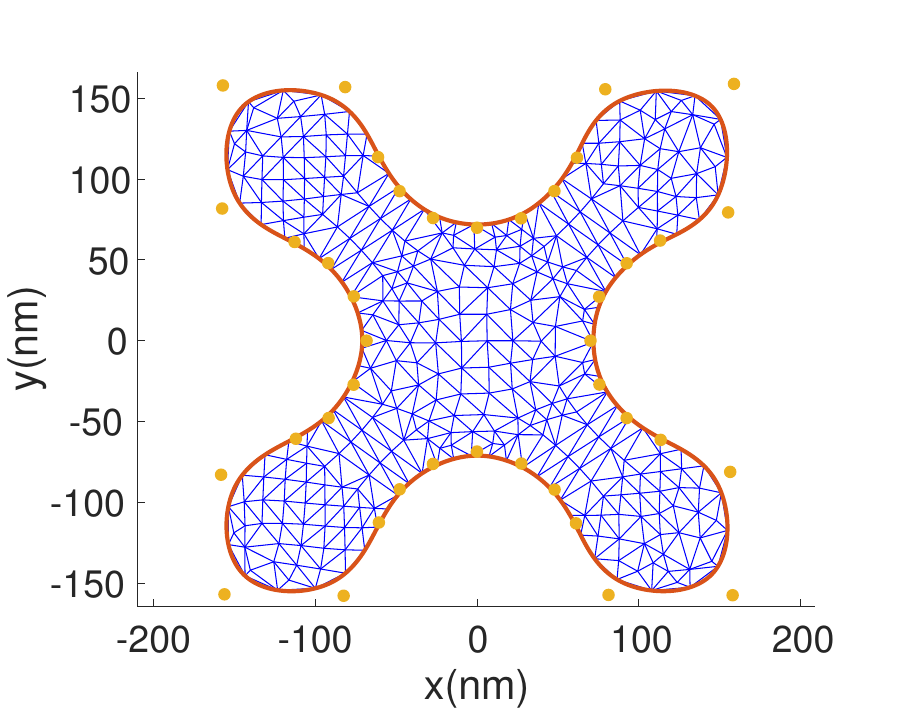}
			\caption{optimized mask}
			\label{fig1_6}
		\end{subfigure}
		\begin{subfigure}[b]{0.32\textwidth}
			\centering
			\includegraphics[width=\textwidth]{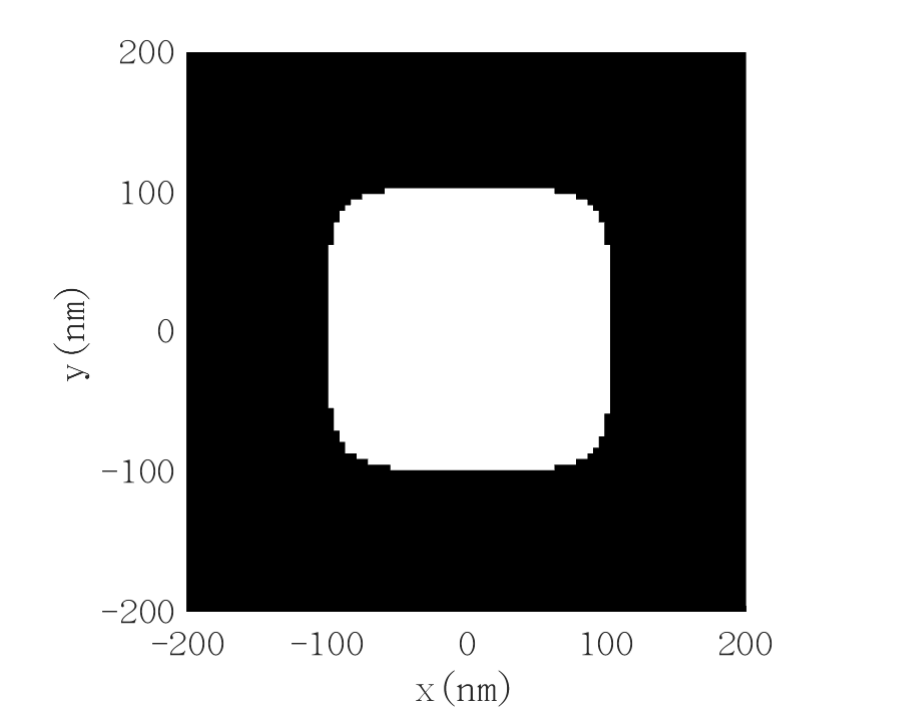}
			\caption{output of optimized mask}
			\label{fig1_7}
		\end{subfigure}
		\begin{subfigure}[b]{0.32\textwidth}
			\centering
			\includegraphics[width=\textwidth]{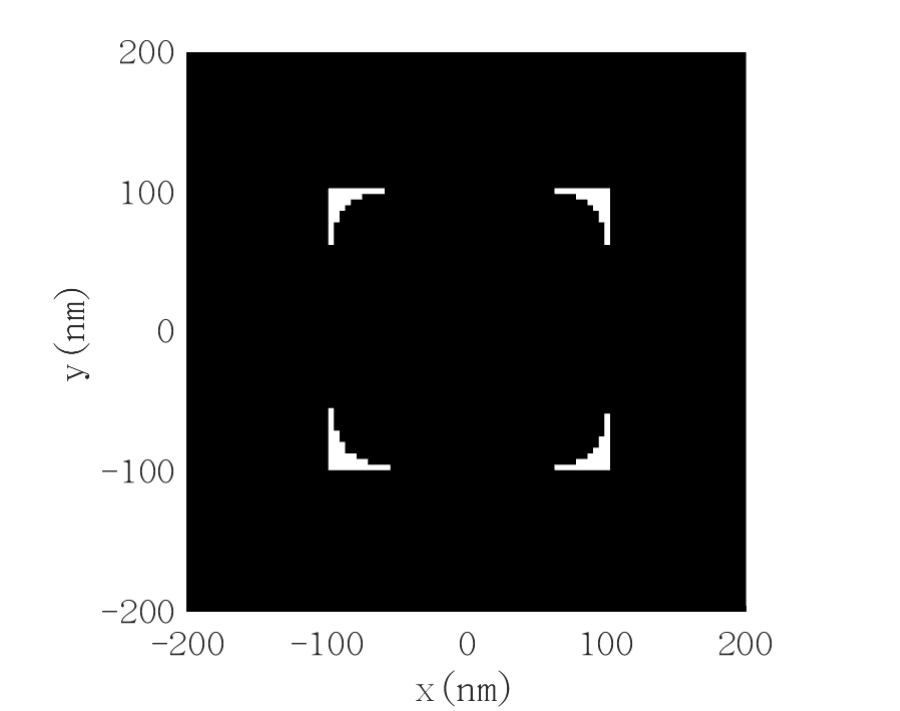}
			\caption{optimized EPE}
			\label{fig1_8}
		\end{subfigure}
		\caption{Example of square pattern} 
		\label{square}
	\end{figure}
    
\paragraph{Example 1.} In Figure \ref{square}, we use the simplest square target pattern as our first example. In this example, we arrange 40 control points on the boundary, 100 discrete points on the spline curve, and set the image size by $100 \times 100$ pixels. The resolution in this example is 4nm/pixel. It took 5144 seconds to finish the 100-step optimization process. In Figure (\ref{fig1_1}) we show the target square pattern. In Figure (\ref{fig1_2}) we show how the objective function decreases as the iteration goes on. We can see from the error decreasing graph that an almost optimal mask can be obtained in about thirty steps, so in fact, we only need to spend about 1500 seconds to get the optimized mask. Figure (\ref{fig1_3}) shows the initial mask pattern, in which we indicate the control points, curvilinear mask boundary, and the triangulation of the initial mask. The initial locations of control points are determined by the boundary of target pattern. Figure (\ref{fig1_4}) shows the output patterns with initial mask pattern as input. It is almost a circle, which differs a lot from the target. Figure (\ref{fig1_5}) shows the edge placement error map of initial mask. 
Figures (\ref{fig1_6}-\ref{fig1_8}) are corresponding pictures for the optimized mask. Among them, 
Figure (\ref{fig1_6}) shows the optimized mask with control points, curvilinear mask boundary and triangulation. It is quite different from the initial one in Figure (\ref{fig1_3}). 
From Figure (\ref{fig1_7}) and (\ref{fig1_8}), we can find that the image pattern is better than that in Figure (\ref{fig1_4}) and the edge placement error is reduced significantly, which confirms the feasibility of our method.

\begin{figure}[htbp]
	\centering
	
	\begin{subfigure}[b]{0.32\textwidth}
		\centering
		\includegraphics[width=\textwidth]{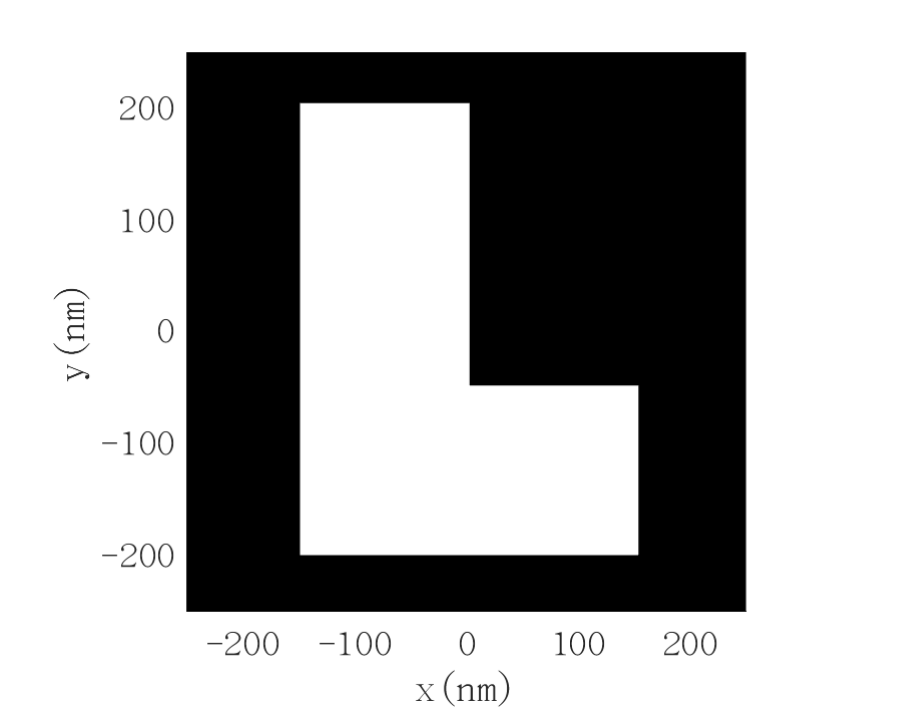}
		\caption{target pattern}
		\label{fig2_1}
	\end{subfigure}
	\begin{subfigure}[b]{0.32\textwidth}
		\centering
		\includegraphics[width=\textwidth]{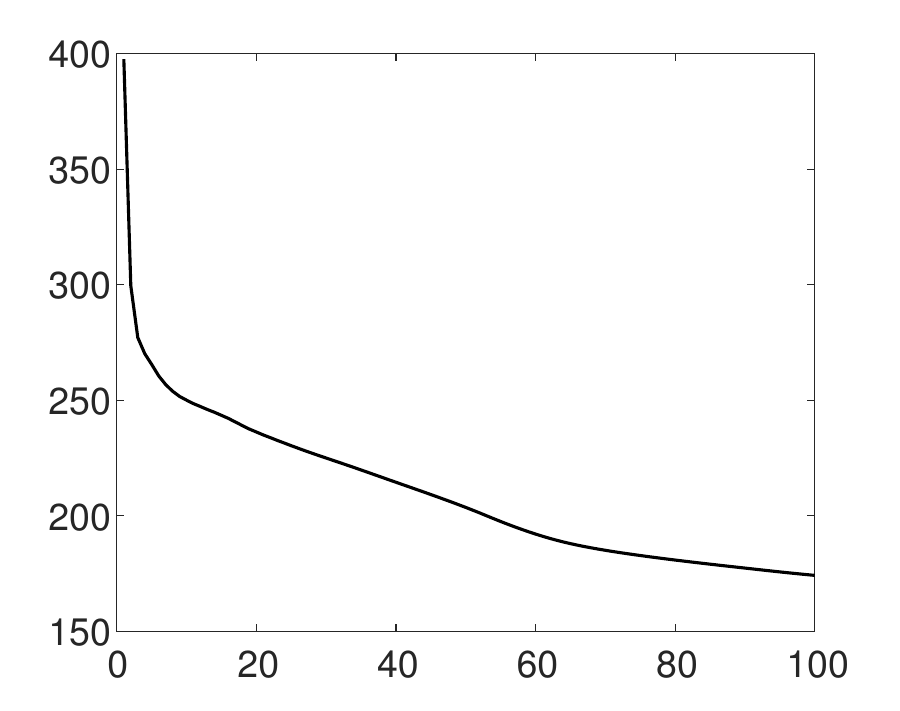}
		\caption{error decreasing}
		\label{fig2_2}
	\end{subfigure}
	
	\begin{subfigure}[b]{0.32\textwidth}
		\centering
		\includegraphics[width=\textwidth]{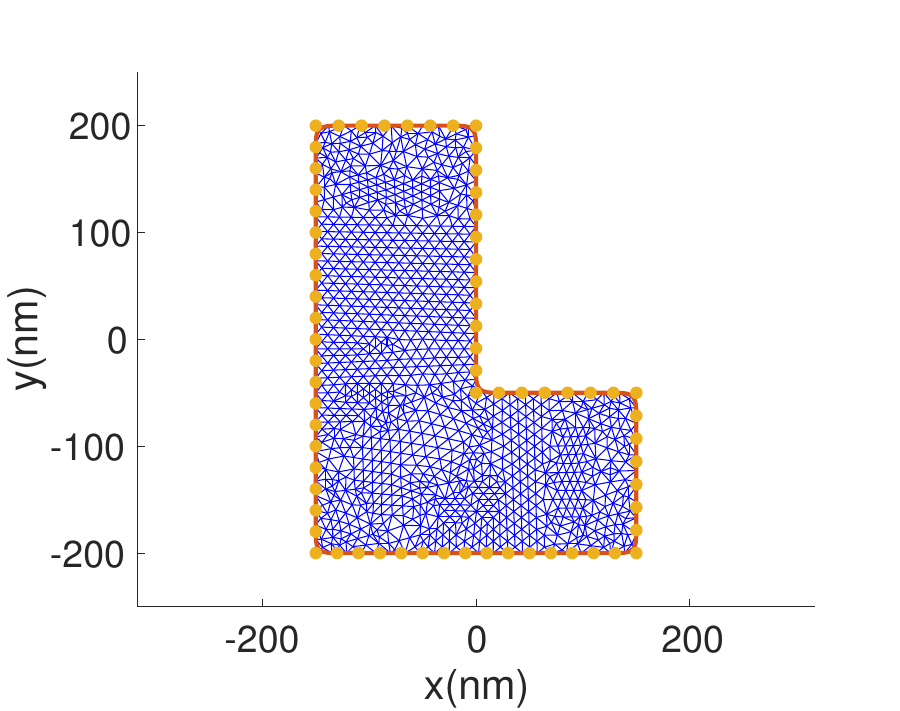}
		\caption{initial mask}
		\label{fig2_3}
	\end{subfigure}
	\begin{subfigure}[b]{0.32\textwidth}
		\centering
		\includegraphics[width=\textwidth]{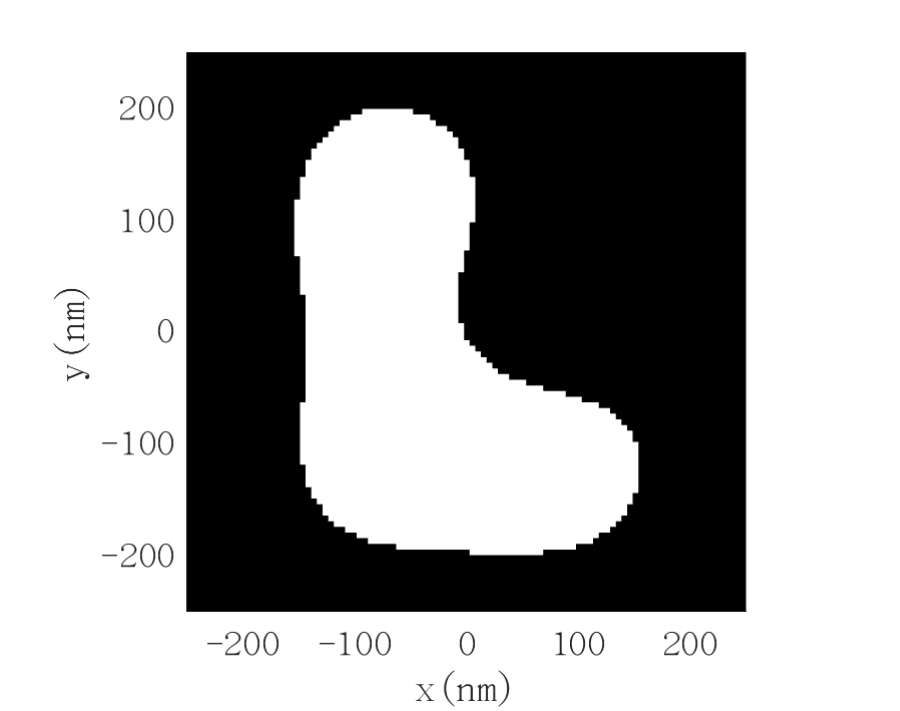}
		\caption{output of initial mask}
		\label{fig2_4}
	\end{subfigure}
	\begin{subfigure}[b]{0.32\textwidth}
		\centering
		\includegraphics[width=\textwidth]{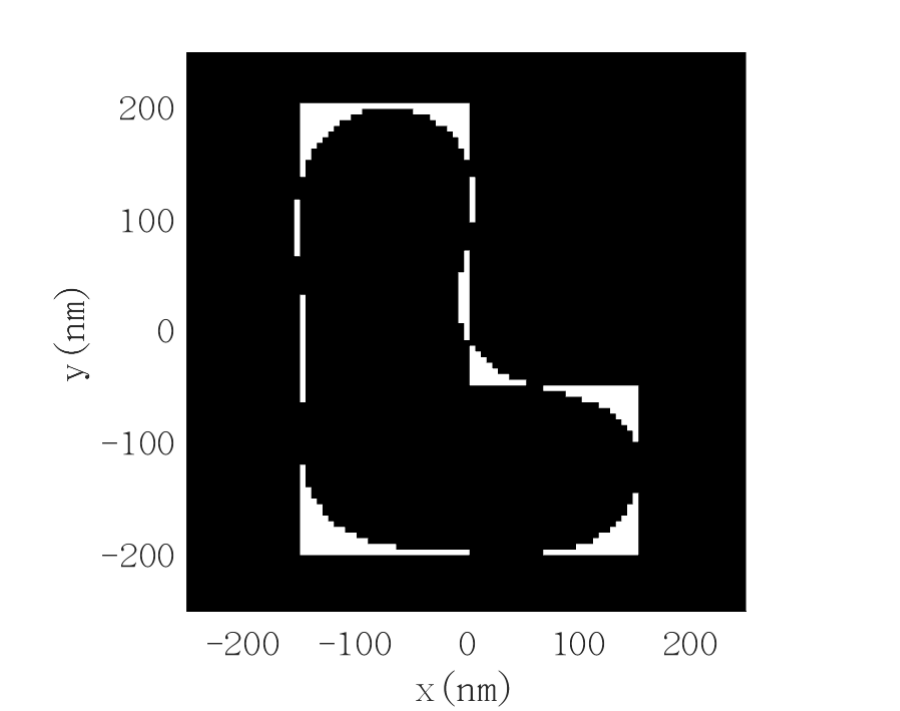}
		\caption{initial EPE}
		\label{fig2_5}
	\end{subfigure}
	
	\begin{subfigure}[b]{0.32\textwidth}
		\centering
		\includegraphics[width=\textwidth]{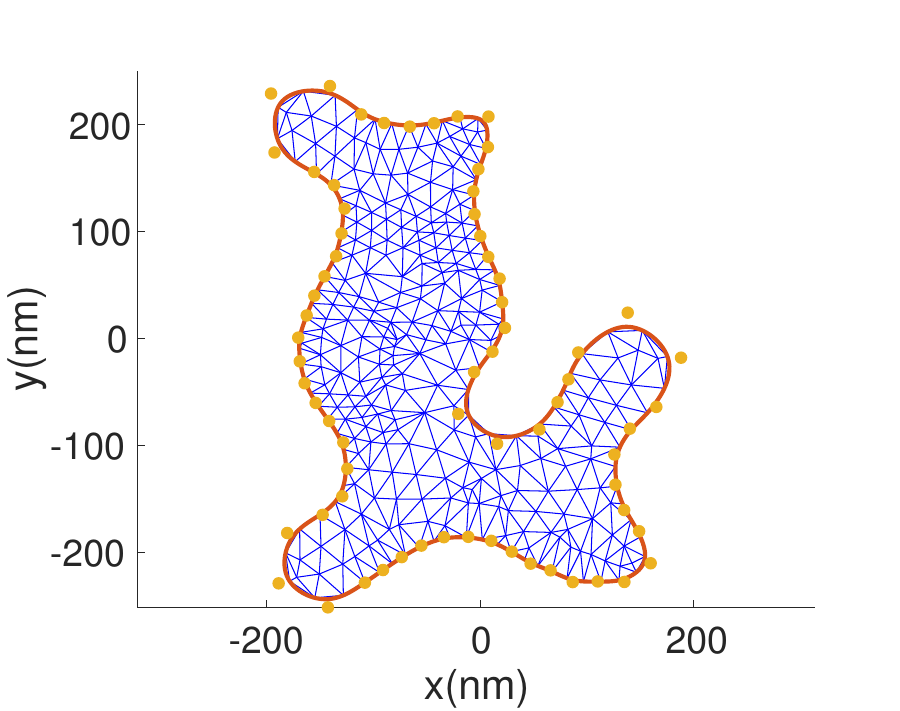}
		\caption{optimized mask}
		\label{fig2_6}
	\end{subfigure}
	\begin{subfigure}[b]{0.32\textwidth}
		\centering
		\includegraphics[width=\textwidth]{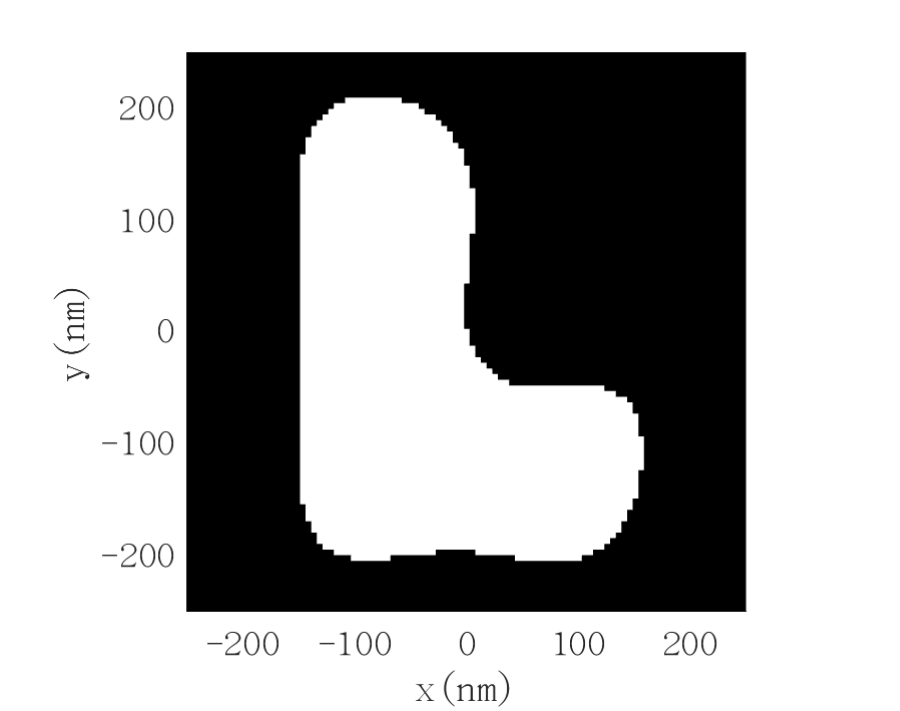}
		\caption{output of optimized mask}
		\label{fig2_7}
	\end{subfigure}
	\begin{subfigure}[b]{0.32\textwidth}
		\centering
		\includegraphics[width=\textwidth]{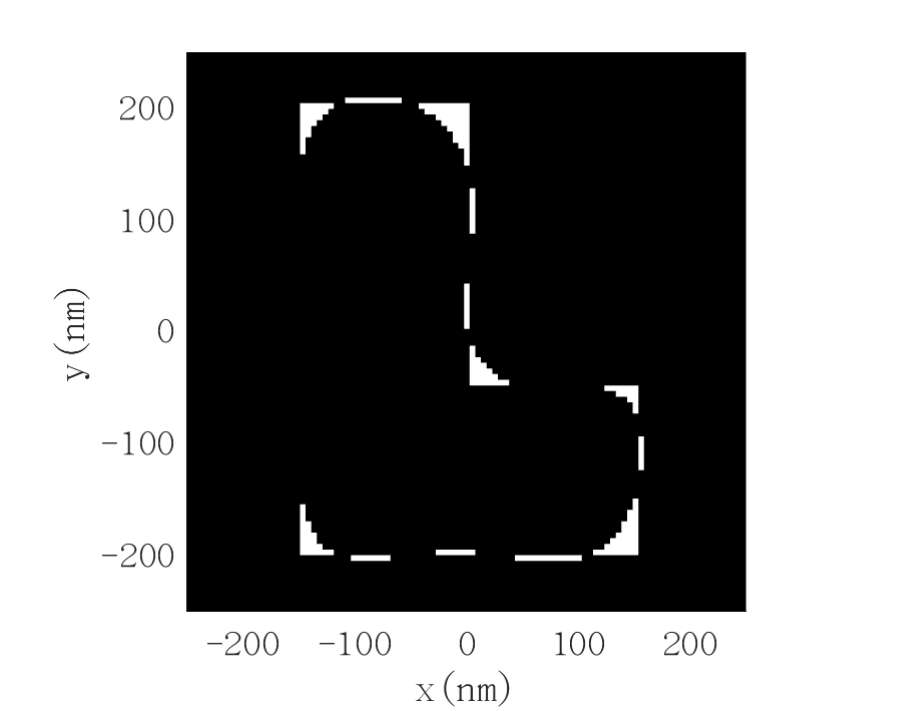}
		\caption{optimized EPE}
		\label{fig2_8}
	\end{subfigure}
	\caption{Example of L-shape pattern} 
	\label{Lshape}
\end{figure}

The arrangements of the following Figures \ref{Lshape},~\ref{twocolumn},~\ref{reversal pattern},~\ref{bigmac} are similar. For each target pattern, we present the initial mask along with its output, the optimized mask and its corresponding output. To more comprehensively assess the imaging quality, we also supply the edge placement error patterns for both the initial and optimized masks. Moreover, we provide a figure to illustrate the reduction of the error during the optimization process.

\paragraph{Example 2}In Figure \ref{Lshape} we use a less symmetrical L-shaped pattern as an example to verify the adaptability of the model to patterns with inside and outside corners. We arrange 68 control points on the boundary, 100 discrete points on the spline curve, and set the image size by $100 \times 100$ pixels. The resolution in this example is 5nm/pixel. It took 7845 seconds to finish the 100-step optimization process. In the initial output, the output pattern shrinks significantly at each end and the inner corner is smoothed. Our method improves the imaging performance at each corner and makes the image more similar to the target pattern, which verified the adaptability to the asymmetrical pattern of our method.

\begin{figure}[htbp]
	\centering

	\begin{subfigure}[b]{0.32\textwidth}
		\centering
		\includegraphics[width=\textwidth]{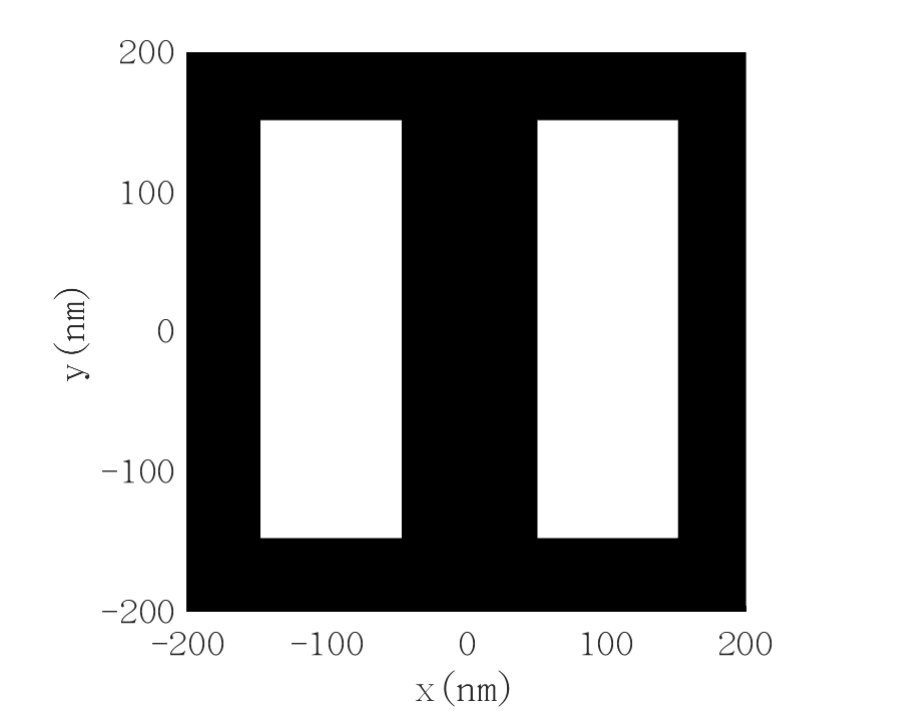}
		\caption{target pattern}
		\label{fig3_1}
	\end{subfigure}
	\begin{subfigure}[b]{0.32\textwidth}
		\centering
		\includegraphics[width=\textwidth]{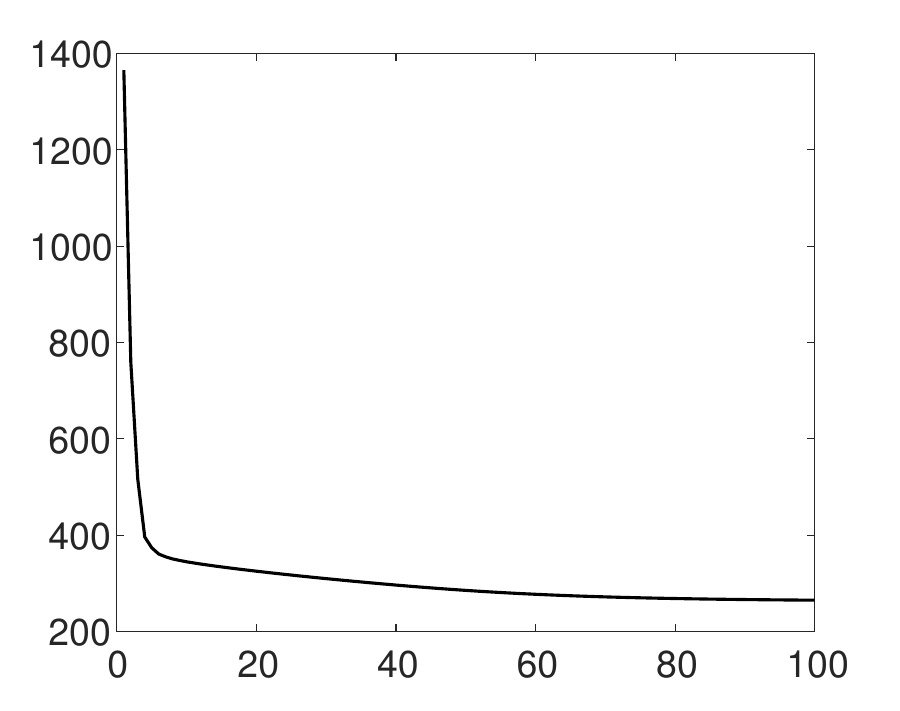}
		\caption{error decreasing}
		\label{fig3_2}
	\end{subfigure}
	
	\begin{subfigure}[b]{0.32\textwidth}
		\centering
		\includegraphics[width=\textwidth]{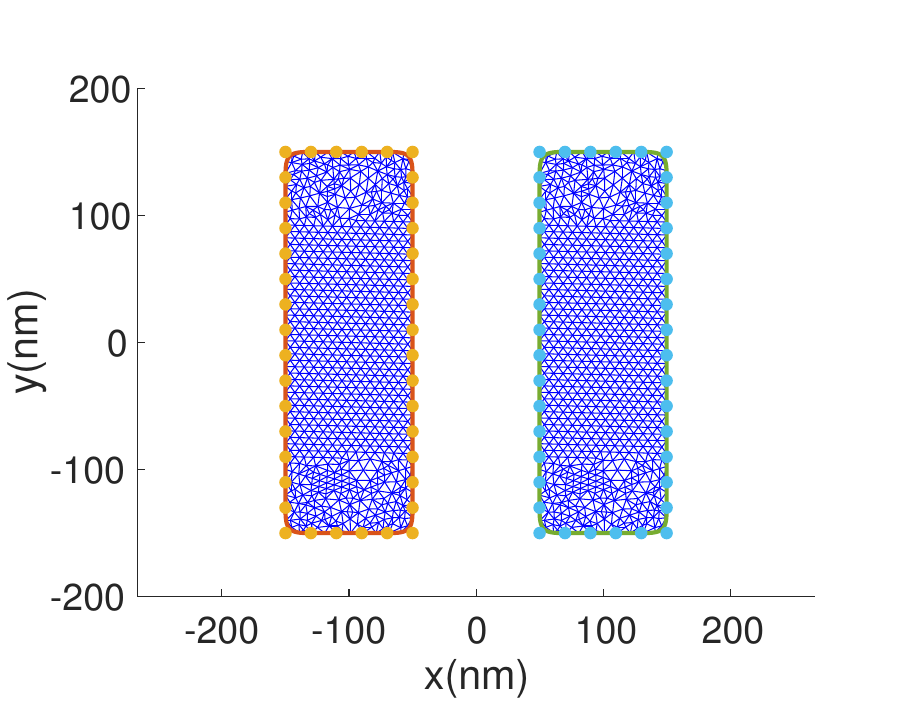}
		\caption{initial mask}
		\label{fig3_3}
	\end{subfigure}
	\begin{subfigure}[b]{0.32\textwidth}
		\centering
		\includegraphics[width=\textwidth]{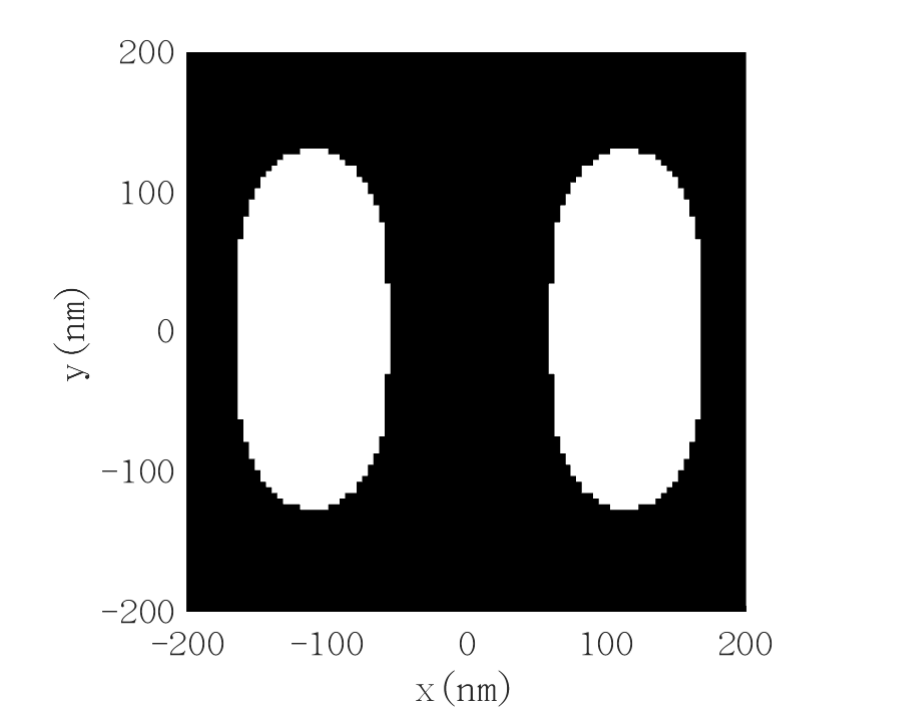}
		\caption{output of initial mask}
		\label{fig3_4}
	\end{subfigure}
	\begin{subfigure}[b]{0.32\textwidth}
		\centering
		\includegraphics[width=\textwidth]{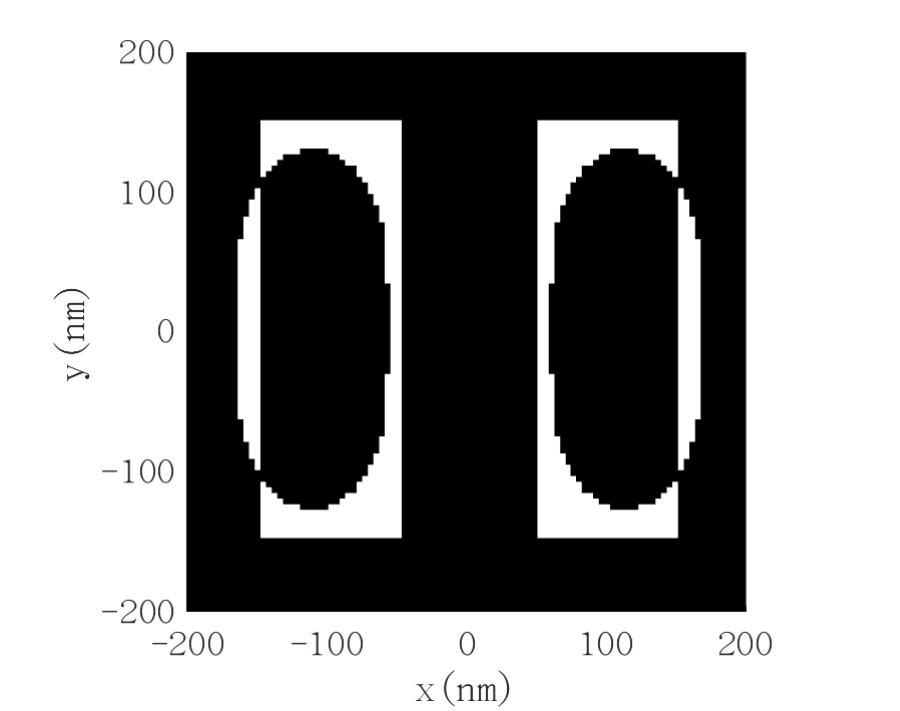}
		\caption{initial EPE}
		\label{fig3_5}
	\end{subfigure}
	
	\begin{subfigure}[b]{0.32\textwidth}
		\centering
		\includegraphics[width=\textwidth]{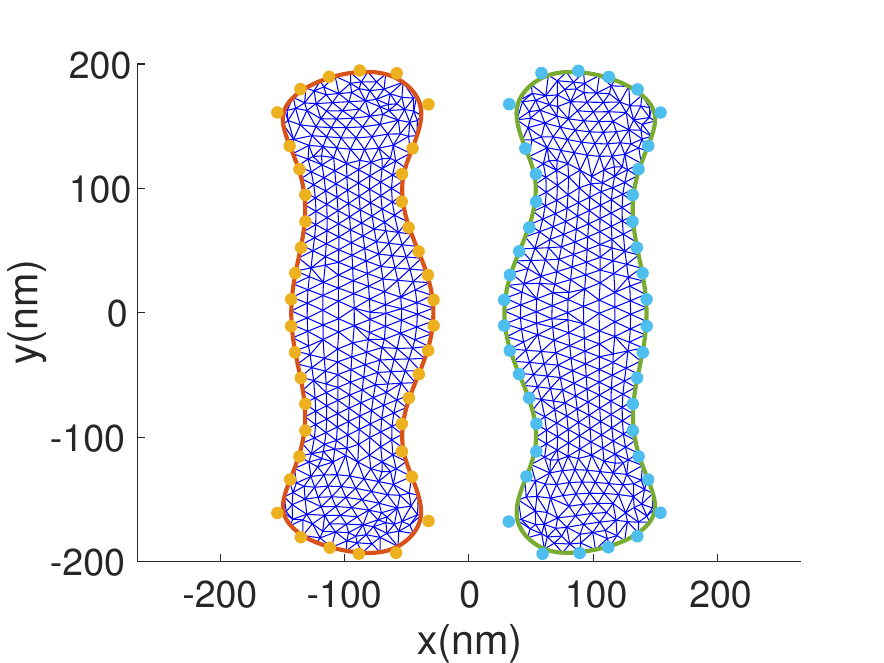}
		\caption{optimized mask}
		\label{fig3_6}
	\end{subfigure}
	\begin{subfigure}[b]{0.32\textwidth}
		\centering
		\includegraphics[width=\textwidth]{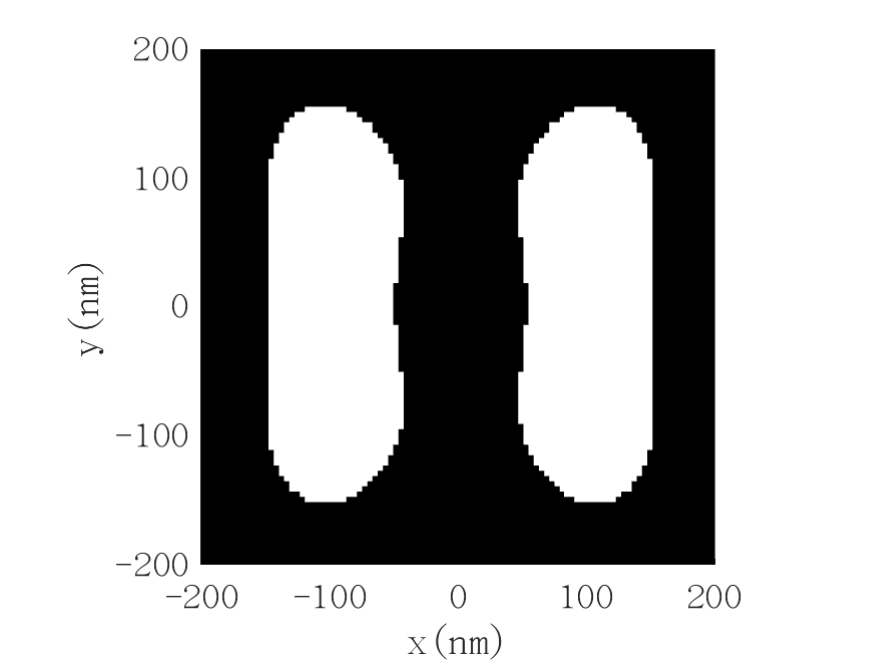}
		\caption{output of optimized mask}
		\label{fig3_7}
	\end{subfigure}
	\begin{subfigure}[b]{0.32\textwidth}
		\centering
		\includegraphics[width=\textwidth]{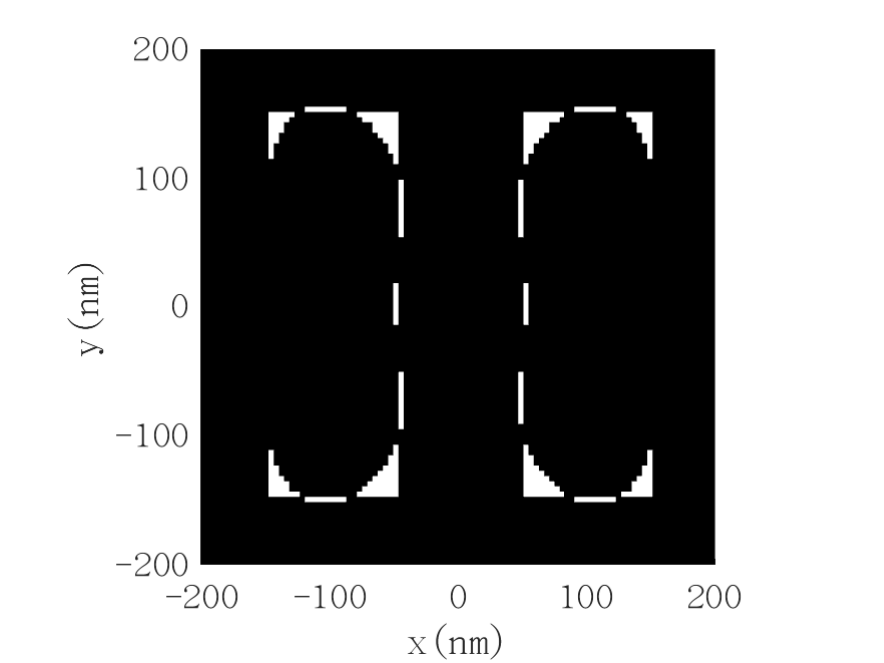}
		\caption{optimized EPE}
		\label{fig3_8}
	\end{subfigure}
	\caption{Example of two rectangle pattern} 
	\label{twocolumn}
\end{figure}

\paragraph{Example 3}In Figure \ref{twocolumn}, we use a pattern of two separate rectangles as an example. Here we arrange 40 control points for each region on the boundary, 80 discrete points on each spline curve, and set the image size by $100 \times 100$ pixels. The resolution in this example is 4nm/pixel. It took 14040 seconds to finish the 100-step optimization process. At each end, the output pattern of the initial mask shrinks significantly and the position of output is away from the original design. This means that the initial mask is not suitable for generating the desired target pattern. After optimization by our method, both the shortening and offset of the initial mask are compensated. This example means that our method can be successfully used for multiple regions.

\begin{figure}[htbp]
	\centering
    
	\begin{subfigure}[b]{0.32\textwidth}
		\centering
		\includegraphics[width=\textwidth]{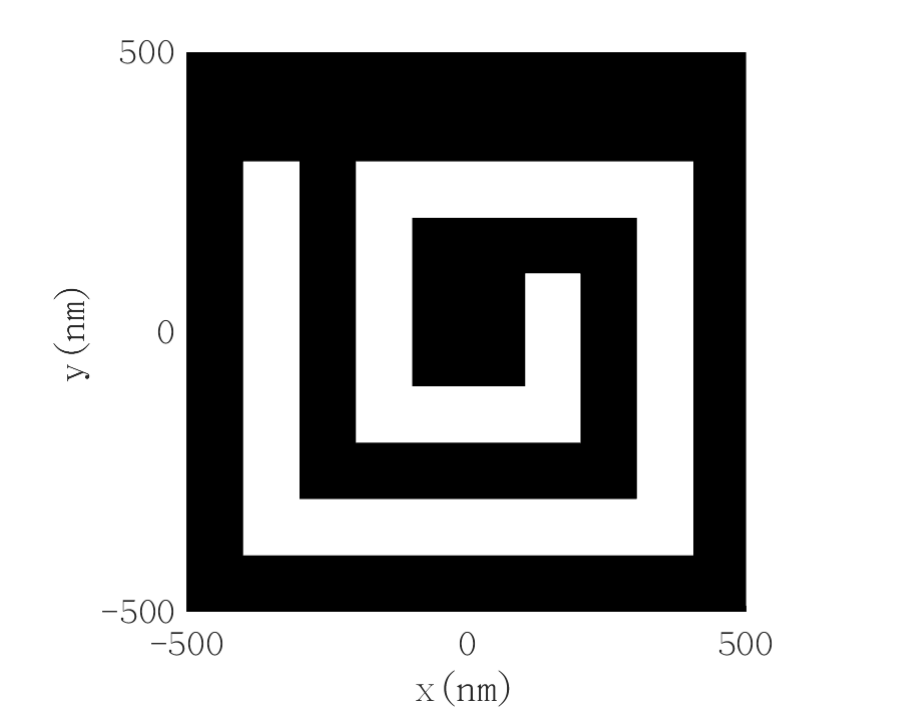}
		\caption{target pattern}
		\label{fig4_1}
	\end{subfigure}
	\begin{subfigure}[b]{0.32\textwidth}
		\centering
		\includegraphics[width=\textwidth]{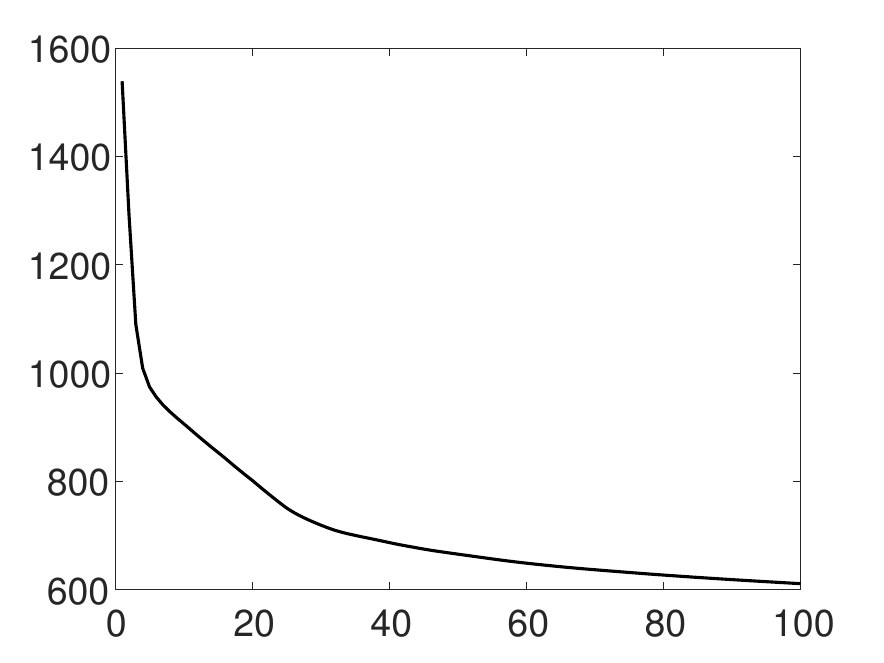}
		\caption{error decreasing}
		\label{fig4_2}
	\end{subfigure}
	
	\begin{subfigure}[b]{0.32\textwidth}
		\centering
		\includegraphics[width=\textwidth]{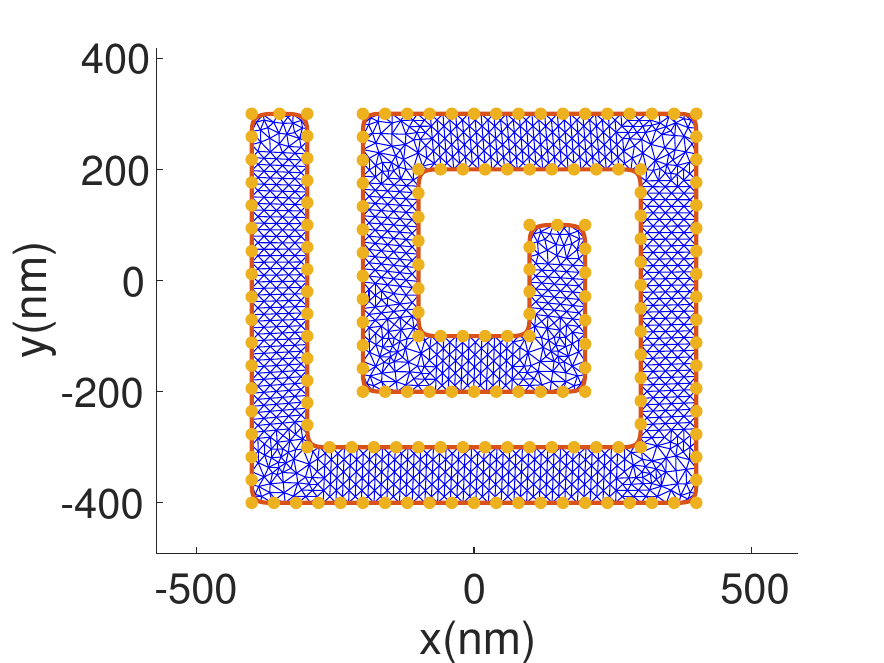}
		\caption{initial mask}
		\label{fig4_3}
	\end{subfigure}
	\begin{subfigure}[b]{0.32\textwidth}
		\centering
		\includegraphics[width=\textwidth]{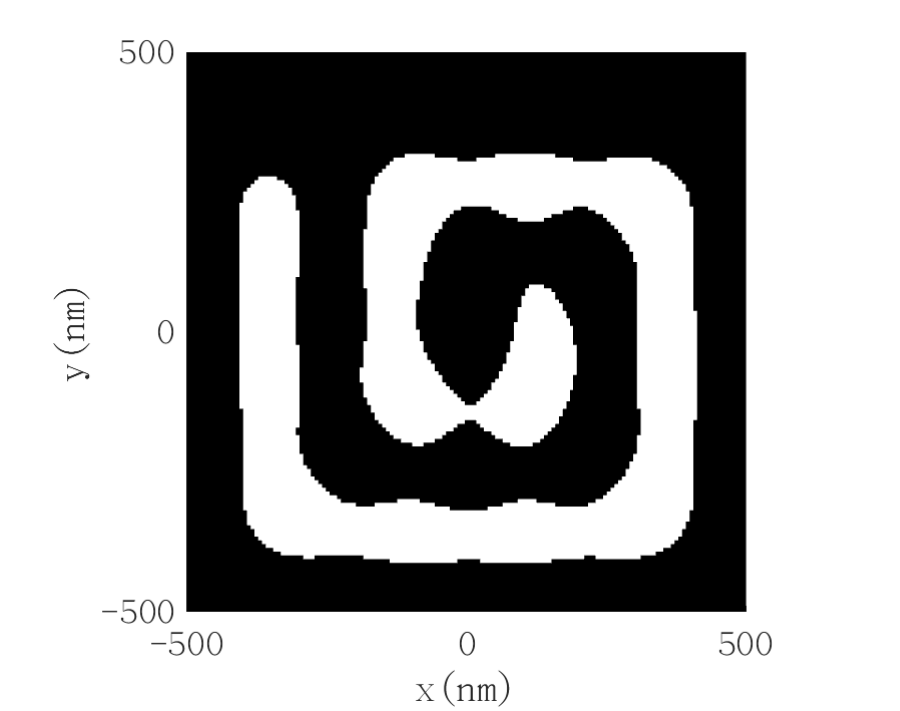}
		\caption{output of initial mask}
		\label{fig4_4}
	\end{subfigure}
	\begin{subfigure}[b]{0.32\textwidth}
		\centering
		\includegraphics[width=\textwidth]{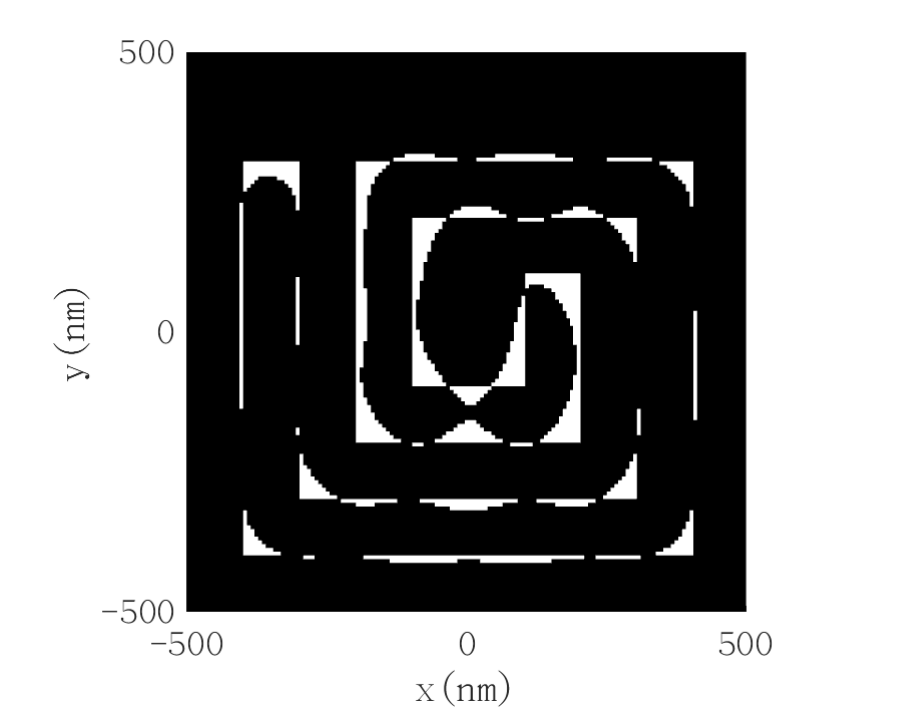}
		\caption{initial EPE}
		\label{fig4_5}
	\end{subfigure}
	
	\begin{subfigure}[b]{0.32\textwidth}
		\centering
		\includegraphics[width=\textwidth]{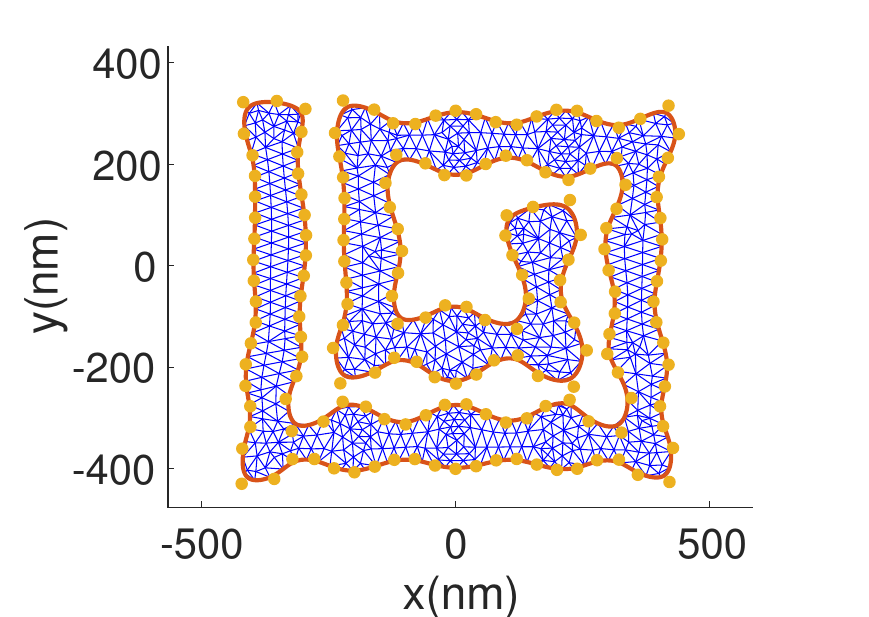}
		\caption{optimized mask}
		\label{fig4_6}
	\end{subfigure}
	\begin{subfigure}[b]{0.32\textwidth}
		\centering
		\includegraphics[width=\textwidth]{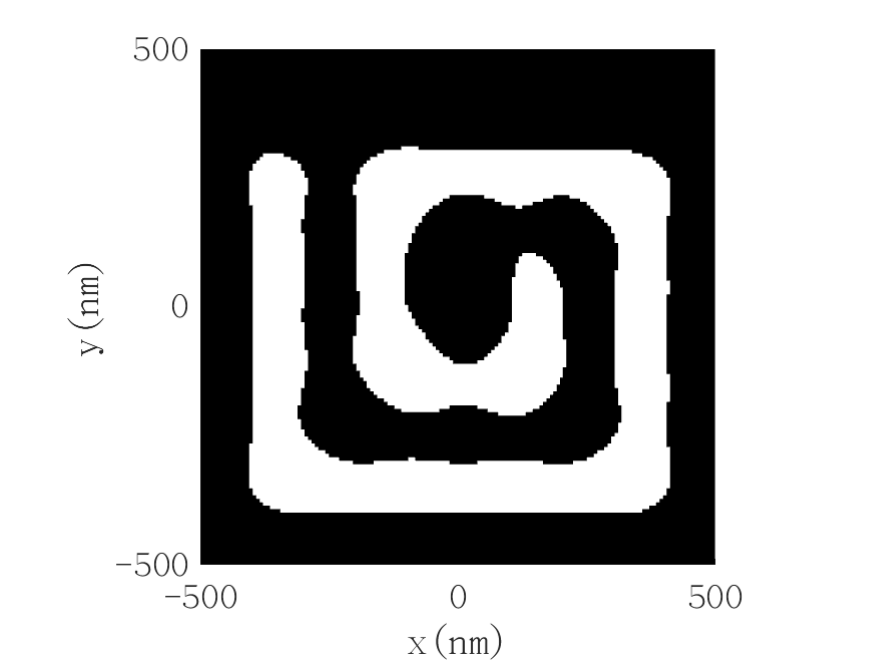}
		\caption{output of optimized mask}
		\label{fig4_7}
	\end{subfigure}
	\begin{subfigure}[b]{0.32\textwidth}
		\centering
		\includegraphics[width=\textwidth]{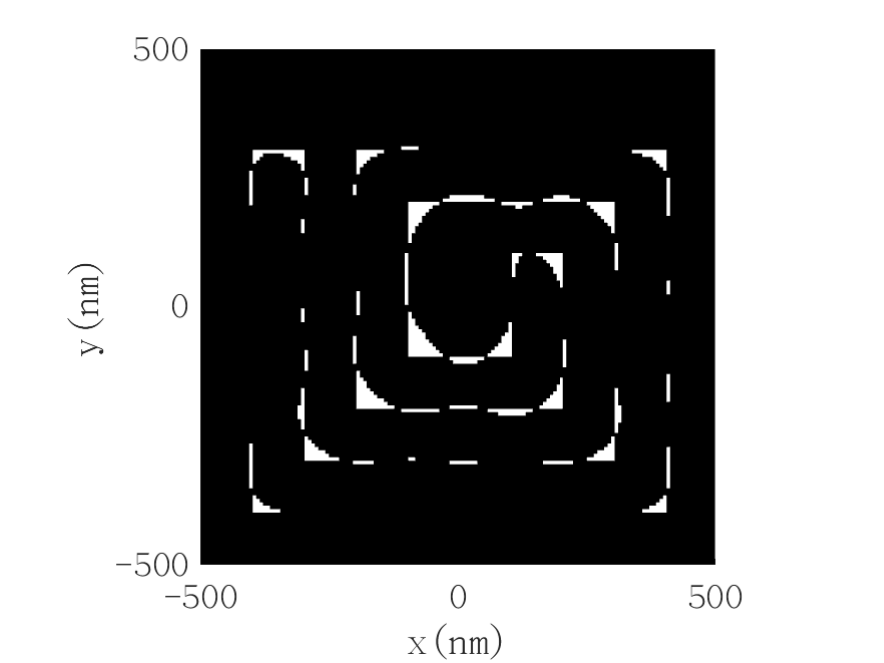}
		\caption{optimized EPE}
		\label{fig4_8}
	\end{subfigure}
	\caption{Example of reversal pattern} 
	\label{reversal pattern}
\end{figure}

\paragraph{Example 4}In Figure \ref{reversal pattern}, we use a relatively complex reversal pattern as an example. In this example, we arrange 171 control points on the boundary, 300 discrete points on the spline curve, and set the image size by $150 \times 150$ pixels. The resolution in this example is about 6.67nm/pixel. It took 38583 seconds to complete the 100-step optimization process. From the initial EPE in Figure (\ref{fig4_5}) we can see some obvious shortcomings of the output. At the top left of the Figure (\ref{fig4_5}), we can see the shortening of the pattern. At the center of the same picture, we can see a severe narrowing, which is unacceptable in optical lithography. Here is still some offset at the center part. With the improvement of our method, the shortcomings mentioned above are noticeably remedied. The shrinkage at each end of the line is compensated. The severe narrowing of line width at the center of the pattern is corrected greatly, and the offset of the pattern is also corrected in the final result.

\begin{figure}[htbp]
	\centering
	
	\begin{subfigure}[b]{0.32\textwidth}
		\centering
		\includegraphics[width=\textwidth]{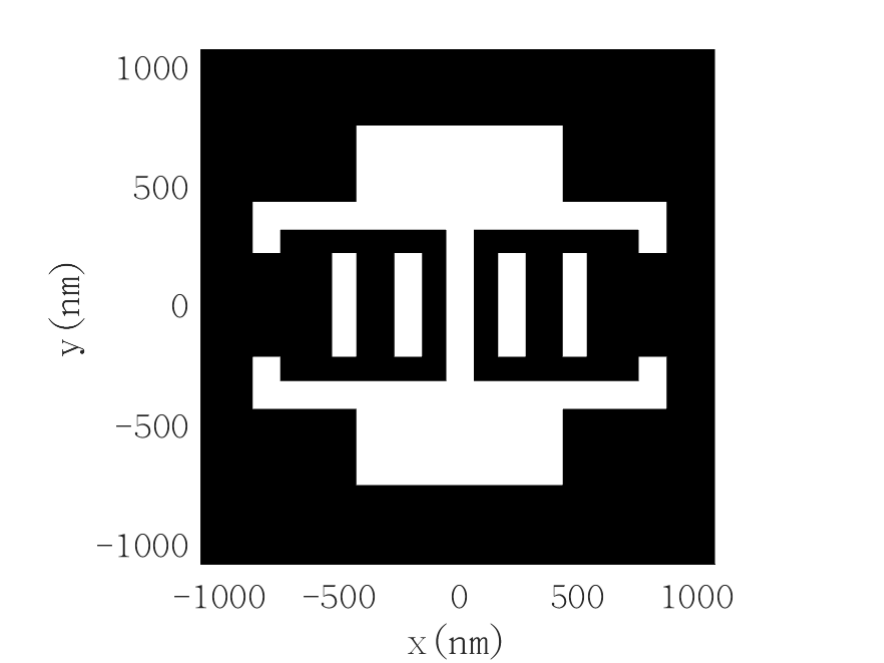}
		\caption{target pattern}
		\label{fig5_1}
	\end{subfigure}
	\begin{subfigure}[b]{0.32\textwidth}
		\centering
		\includegraphics[width=\textwidth]{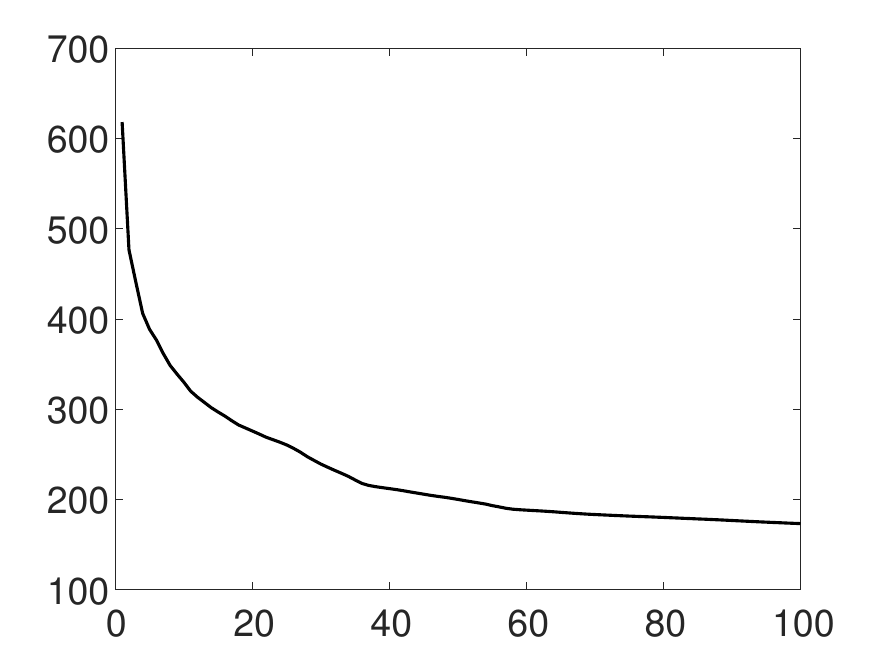}
		\caption{error decreasing}
		\label{fig5_2}
	\end{subfigure}
	
	\begin{subfigure}[b]{0.32\textwidth}
		\centering
		\includegraphics[width=\textwidth]{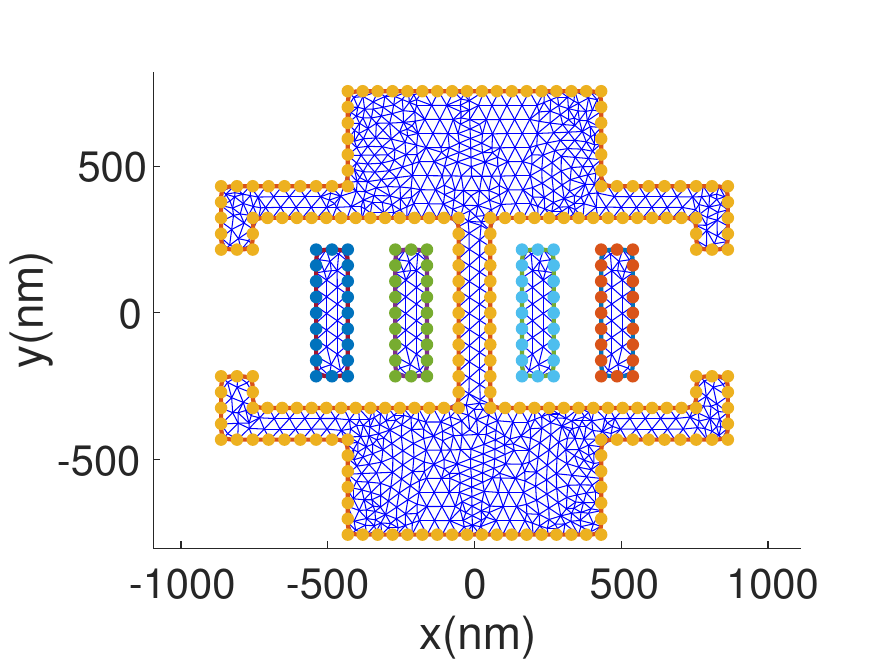}
		\caption{initial mask}
		\label{fig5_3}
	\end{subfigure}
	\begin{subfigure}[b]{0.32\textwidth}
		\centering
		\includegraphics[width=\textwidth]{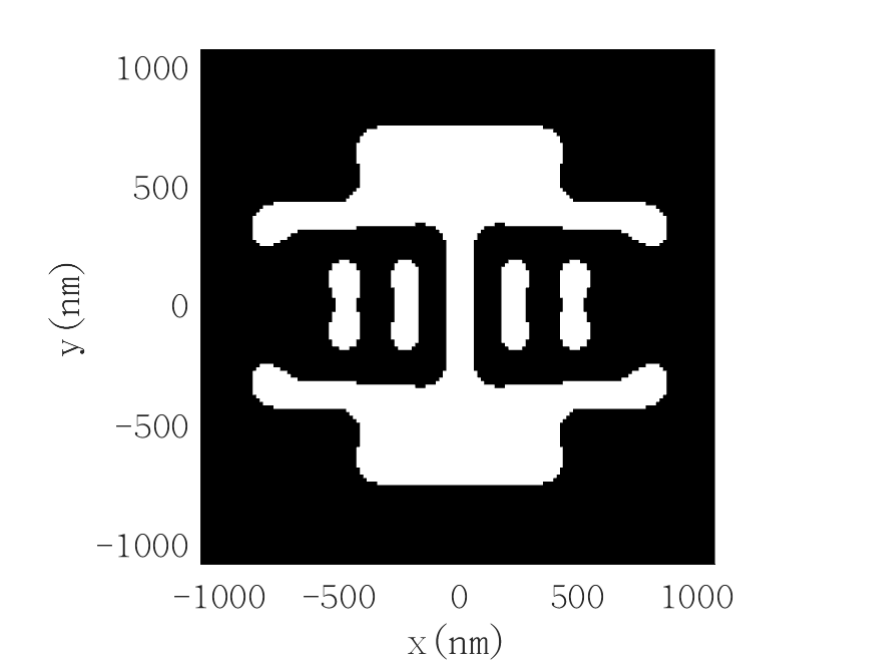}
		\caption{output of initial mask}
		\label{fig5_4}
	\end{subfigure}
	\begin{subfigure}[b]{0.32\textwidth}
		\centering
		\includegraphics[width=\textwidth]{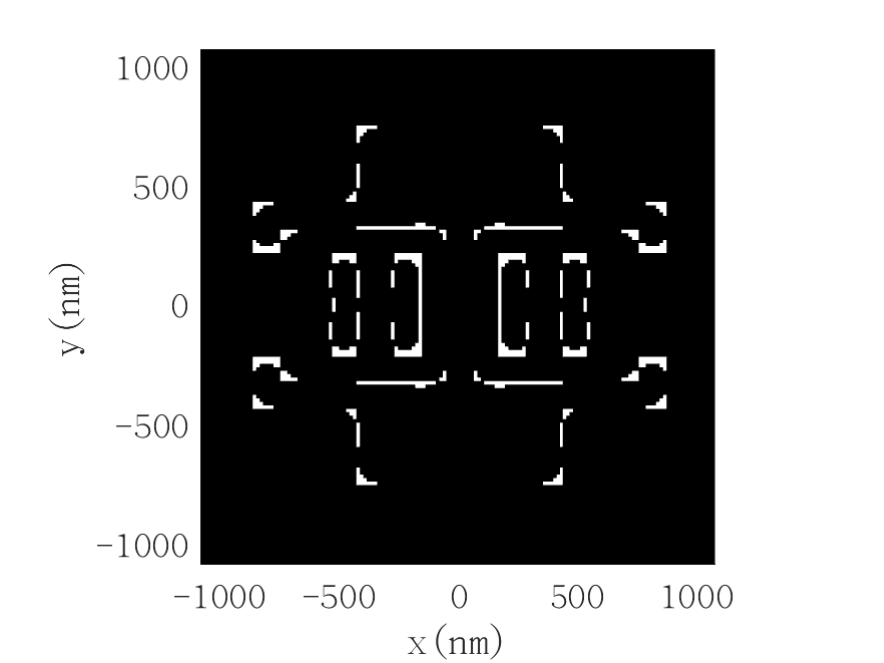}
		\caption{initial EPE}
		\label{fig5_5}
	\end{subfigure}
	
	\begin{subfigure}[b]{0.32\textwidth}
		\centering
		\includegraphics[width=\textwidth]{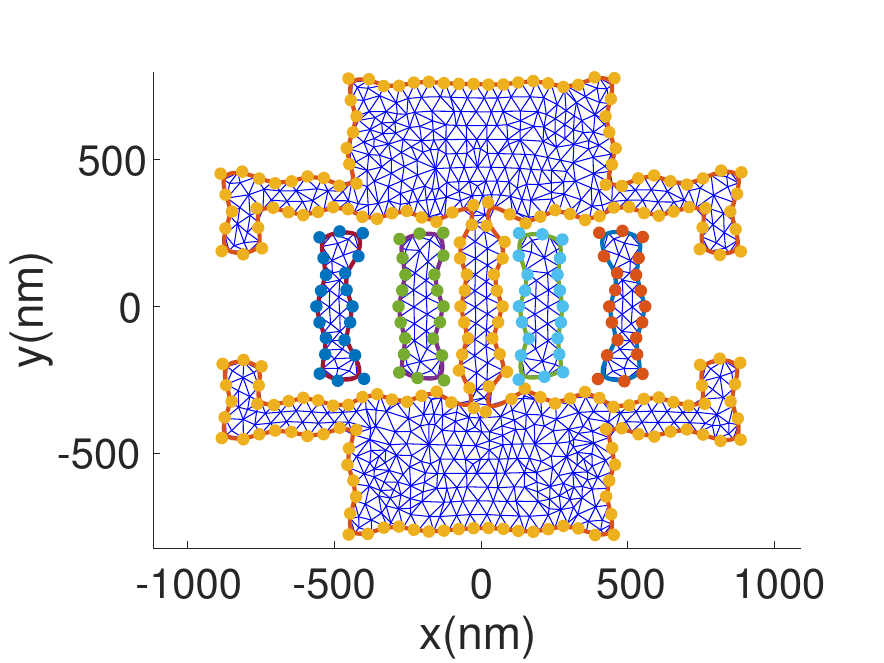}
		\caption{optimized mask}
		\label{fig5_6}
	\end{subfigure}
	\begin{subfigure}[b]{0.32\textwidth}
		\centering
		\includegraphics[width=\textwidth]{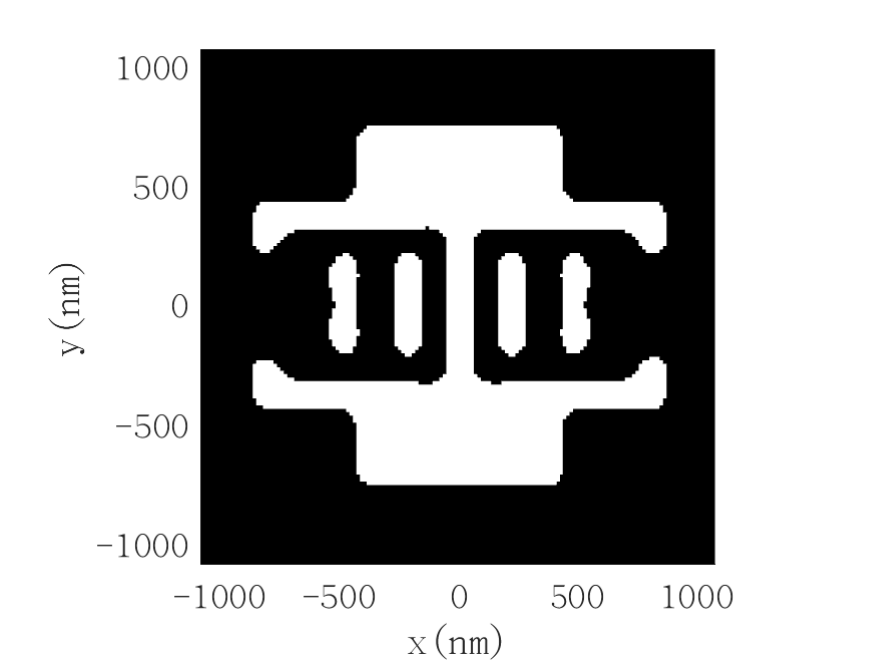}
		\caption{output of optimized mask}
		\label{fig5_7}
	\end{subfigure}
	\begin{subfigure}[b]{0.32\textwidth}
		\centering
		\includegraphics[width=\textwidth]{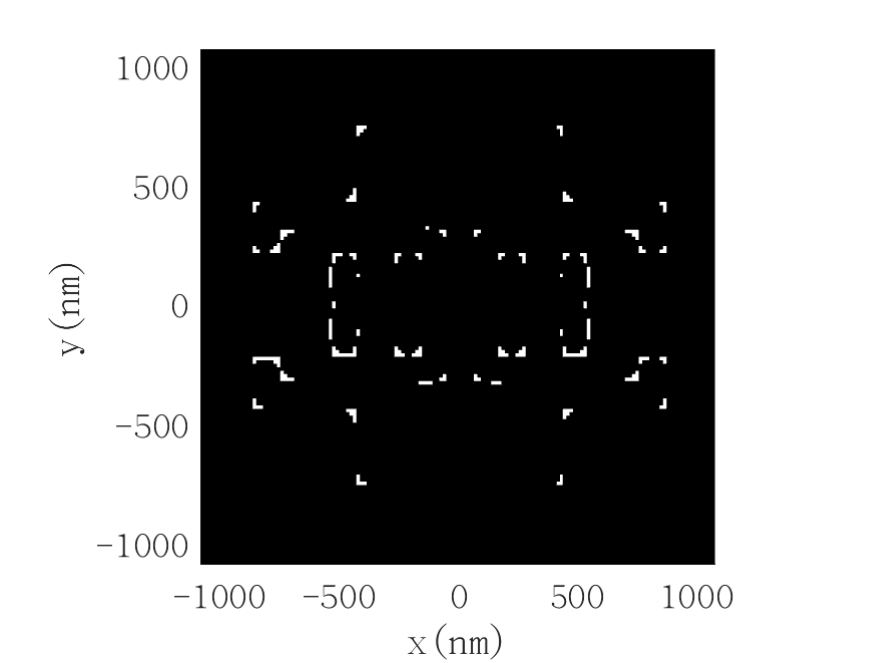}
		\caption{optimized EPE}
		\label{fig5_8}
	\end{subfigure}
	\caption{Example of BigMaC pattern} 
	\label{bigmac}
\end{figure}

\paragraph{Example 5}In the last example we use Big-Small-Feature \& Corner Test Mask (BigMaC)\cite{bigmac} pattern as an example. It is a mask with a large number of features coexisting simultaneously, including both large-scale and small-scale features, long-scale and short-scale features, inner and outer corners, as well as vertical and horizontal features. The detailed results are listed in Figure \ref{bigmac}. Here we extended the size of the pattern to 1.3 times. In this example the number of control points for the biggest region is 202, discrete points on this spline curve is 200. And for each small region, we arrange 20 control points on the boundary, 20 discrete points on the curve. We set the image size by $150 \times 150$ pixels. The resolution in this example is about 14.39nm/pixel. It took 41765 seconds to complete the 100-step optimization process. From the results in the last row of Figure \eqref{bigmac}, we can find that the optimization algorithm brings a satisfactory improvement on the narrowing, shortening and offset. This example demonstrates that our method still exhibits excellent optimization capabilities for complex masks and has the potential to be applied to the production of complex masks.

\section{Conclusion} 
In this paper, the curvilinear mask boundary is represented by the periodic B-splines. The Delaunay triangulation and numerical integral are used to simulate the optical lithography imaging process. We establish an explicit relationship between the integral points and the control points of the spline curves. Based on the relationship, we further establish an explicit formula to calculate the gradient of objective functional for inverse lithography, and propose a steepest descent optimization algorithm based on the gradient. Various numerical experiments illustrate that our method exhibits excellent adaptability to the mask optimization problem, regardless of whether the mask has a single connected region or multiple separate regions. In the future, we will consider how to arrange initial control points more flexibly to give more freedom to the regions that need to be described more precisely. This will be helpful in improving the qualities of the inverse lithography problem.




\bibliographystyle{plain}
\bibstyle{abbrv}
    \bibliography{references}
\end{document}